\documentclass[11pt,a4paper]{amsart}
\usepackage{amssymb}
\usepackage{enumerate}
\usepackage{mathtools}
\usepackage{xcolor}
\usepackage{textcomp}
\usepackage{amsthm}
\usepackage{array}
\usepackage[margin=2.8cm]{geometry}
\usepackage{float}
\usepackage{hyperref}
\usepackage{rotating}

\newcolumntype{L}{>{$}l<{$}}

 \theoremstyle{plain}
\theoremstyle{definition}

\newtheorem{definition}{Definition}[section]

\theoremstyle{remark}
\newtheorem{remark}[definition]{Remark}

\newtheorem{corollary}[definition]{Corollary}

\newtheorem{lemma}[definition]{Lemma}

\newtheorem{theorem}[definition]{Theorem}
\newtheorem{proposition}[definition]{Proposition}

\newcounter{claim}[definition]

\theoremstyle{remark}

\theoremstyle{remark}

\theoremstyle{definition}

\def\cn{\mathord{{\!\:{:}\:\!}}}

\newcommand{\ns}{\mathbin{\mspace{-4mu}\vcenter{\hbox{\raisebox{1ex}{$\cdot$}}}\mspace{-4mu}}}
\newcommand{\twist}[1]{%
  {}^{2}\!#1%
}

\newcommand{\GG}{\mathcal{G}}

\newcommand{\BM}{\mathbb{B}}
\newcommand{\MM}{\mathbb{M}}

\DeclareMathOperator{\Fix}{Fix}

\DeclareMathOperator{\Syl}{Syl}

\DeclareMathOperator{\Ly}{Ly}
\DeclareMathOperator{\McL}{McL}
\DeclareMathOperator{\M}{M}
\DeclareMathOperator{\HN}{HN}
\DeclareMathOperator{\HS}{HS}
\DeclareMathOperator{\He}{He}
\DeclareMathOperator{\J}{J}
\DeclareMathOperator{\Suz}{Suz}
\DeclareMathOperator{\Th}{Th}
\DeclareMathOperator{\Co}{Co}
\DeclareMathOperator{\ON}{O'N}
\DeclareMathOperator{\Ru}{Ru}
\DeclareMathOperator{\Fi}{Fi}

\DeclareMathOperator{\B}{B}

\DeclareMathOperator{\E}{E}
\DeclareMathOperator{\F}{F}
\DeclareMathOperator{\G}{G}

\DeclareMathOperator{\Alt}{Alt}

\DeclareMathOperator{\Sym}{Sym}

\DeclareMathOperator{\Irr}{Irr}

\renewcommand{\epsilon}{\varepsilon}

\DeclarePairedDelimiter{\gen}{\langle}{\rangle}

\title[Gill--Guillot graph for sporadic groups]{The Gill--Guillot commuting graph \\for sporadic and related groups\vspace*{-0.4cm}}
\author{David A. Craven}
\address{School of Mathematics, University of Birmingham, UK}
\email{d.a.craven@bham.ac.uk}
\author{Coen del Valle}\address{School of Mathematics and Statistics,
The Open University, UK
}
\email{Coen.del-Valle@open.ac.uk}
\author{Chris Parker}\address{School of Mathematics, University of Birmingham, UK
}
\email{c.w.parker@bham.ac.uk}
\date{\today}
\thanks{We are grateful to the anonymous referee for their helpful comments which have greatly improved this paper. Research of Coen del Valle is supported by the Engineering and Physical Sciences Research Council
[grant number EP/Z534742/1].}
\begin{document}
\begin{abstract}\vspace*{-0.3cm} Let $G$ be a finite group and $\mathcal{C}$ a normal subset of $G$. The \emph{Gill--Guillot graph} has vertex set $\mathcal C$ with distinct $x, y \in \mathcal C$ adjacent if and only if $x$ and $y$ commute and $\{xy^{-1},x^{-1}y\} \cap \mathcal C$ is non-empty. We study the connectivity of this graph for quasisimple groups with $G/Z(G)$ a sporadic simple group and for certain simple groups with exceptional Schur multiplier.
\end{abstract}
\vspace*{-0.9cm}\maketitle\vspace*{-0.5cm}
\vspace*{-0.25cm}

\section{Introduction}
Motivated by applications to the study of relational structures \cite{cherlin2000,Lach84} and binary actions, in \cite{GG25} Gill and Guillot introduced the \emph{Gill--Guillot graph} of a finite group $G$, which we will denote by $\GG(\mathcal C)$. This is a graph with vertex set a normal subset $\mathcal C$ of $G$, in which distinct vertices $x, y \in \mathcal C$ are adjacent if and only if $x$ and $y$ commute and $xy^{-1}$ or its inverse lies in $\mathcal C$---this collapses to the single condition $xy \in \mathcal C$ when $\mathcal C$ consists of only involutions. Gill and Guillot used connectivity properties of $\GG(\mathcal C)$ to deduce structural information about point stabilizers in binary actions.

The graph $\GG(\mathcal C)$ is a subgraph of a commonly used graph in group theory, the \emph{commuting graph} on $\mathcal{C}$, in which distinct elements of $\mathcal{C}$ are adjacent if they commute. More generally, for a finite group $G$ and subset $X\subseteq G$, the \emph{commuting graph}, denoted $\mathrm{Com}(G,X)$, is the graph on $X$ in which $x,y\in X$ are adjacent if and only if $[x,y]=1$. When $X$ is the set of elements of order $p$, this graph is disconnected if and only if the group possesses a strongly $p$-embedded subgroup (see Proposition~\ref{prop:spe}), so the question of whether $\GG(\mathcal C)$ is connected for a conjugacy class of $p$-elements $\mathcal C$ of a finite group $G$ is related to the question of whether $G$ has a strongly $p$-embedded subgroup. Commuting graphs have played an important role in the local analysis of finite groups; for example, the Brauer--Fowler theorem was proved by analysing commuting involutions, and commuting graphs have also been used in recognition and uniqueness results for finite simple groups (see, for example, \cite{brauerfowler1955,AschSeg92}).

As an example close to our specific graph, in \cite[Theorem 1]{AschbacherExistence} Aschbacher considers $\mathrm{Com}(G,\mathcal D)$ in the case when $\mathcal D$ is a normal subset of involutions such that for all edges $\{u,v\}$ in $\mathrm{Com}(G,\mathcal D)$, the product $uv$ lies in $\mathcal D$. Hence in this case $\mathrm{Com}(G,\mathcal D)=\GG(\mathcal D)$.
Aschbacher proves that if $\GG(\mathcal D)$ is disconnected, then for $H$ the stabilizer of a connected component of $\GG(\mathcal D)$, we have that $H \cap \gen{\mathcal D}$ is a strongly embedded subgroup of $\gen{\mathcal D}$. Hence, if $G$ is a finite simple group, then using \cite[Satz 4]{Bender} we know that $G \cong \mathrm{L}_2(2^a)$, $\mathrm{U}_3(2^a)$, or $\twist\B_2(2^{2a-1})$ with $a\ge 2$. Aside from their many applications, commuting graphs have been extensively studied in their own right, with a significant body of work dedicated to understanding their combinatorial properties, such as connectivity (see, for example,~\cite{BBHR07,IranJafar08,MorPar13}).

Inspired by \cite{AschbacherExistence} and applications to binary actions, it is natural to determine the connectivity of $\GG(\mathcal C)$ in the situation where $G=\gen{\mathcal C}$ (a condition which -- when $\GG(\mathcal{C})$ is not edgeless and $\mathcal C$ is a single conjugacy class -- forces $G$ to be perfect) and this is the setting in which our work sits. In the case that $\mathcal C$ is a conjugacy class of involutions, the connectivity of $\GG(\mathcal C)$ is settled for $G$ an alternating group in \cite[Proposition 3.4]{GG25}, for $G$ a simple group of Lie type in characteristic $2$ in \cite[Theorem 2]{GGL25}, and in the case where $G$ is simple and $\mathcal C$ is the only involution class in \cite[Proposition 3.1]{GG252}. The case in which $G$ is a quasisimple group of Lie type of untwisted rank at most 2, and $p$ is either $2$ or the defining characteristic of $G$, is treated in work currently in preparation \cite{dVGL26}.

In this article we determine the connectivity of $\GG(\mathcal C)$ for conjugacy classes $\mathcal C$ of elements of any prime order $p$, when $G$ is a quasisimple group with $G/Z(G)$ a sporadic simple group and when $G$ is an exceptional cover of either an alternating group or a simple group of Lie type. In the interest of brevity, our main results will be restricted to the case $Z(G)$ is a $p$-group; this is sufficient to understand the general case (see Lemma~\ref{lem:pnotcenter} and the discussion following it). For completeness we shall consider ${}^2\mathrm{F}_4(2)'$ as a sporadic group.

Suppose that $\GG=\GG(\mathcal{C})$ is a Gill--Guillot graph and let $t \in \mathcal C$. Then $\Lambda(t)$ denotes the connected component of $\GG$ containing $t$ and the \emph{component group} of $t$ is $\Delta(t)=\gen{\Lambda(t)}$. Throughout, conjugacy classes are named according to the {\sf GAP} Character Table Library~\cite{CTblLib1.3.11}. We can now summarize our main results as follows. 

\begin{theorem}\label{thm:mainresults}
Let $p$ be a prime, let $G$ be a quasisimple group with $Z(G)$ a (possibly trivial) $p$-group, and let $t\in G\setminus Z(G)$ have order $p$. Set
\[ \GG=\GG(t^G),\;\;\;\; \Lambda=\Lambda(t),\;\;\;\; \Delta=\Delta(t).\]
Assume either that $G/Z(G)$ is a sporadic simple group, or that $G$ is an exceptional cover of either an alternating group or a simple group of Lie type. If $\GG$ is disconnected, then one of the following holds:
\begin{enumerate}
\item[(a)] $G$ is a sporadic simple group and either
\begin{enumerate}
\item[(i)] $\Delta=\langle t\rangle$; 
\item[(ii)] $\Delta$ is elementary abelian of order $p^2$; or
\item[(iii)] $\Delta\cong \J_2,\, \HS,\, \mathrm{O}_8^+(2)$, or $5_+^{1+4}$.
\end{enumerate}
In each case, if $p^2\mid |G|$ then the pair $(G,t^{G})$ is explicitly listed, otherwise $\Delta=\langle t\rangle$.

\item[(b)] $G/Z(G)$ is a sporadic simple group, $Z(G) $ is a non-trivial $p$-group, and either
\begin{enumerate}
\item[(i)] $p=2$ and $(G,t^G)$ is one of $(2\ns\J_2,\mathrm{2c})$, $(2\ns\Fi_{22},\mathrm{2b})$, $(2\ns\Fi_{22},\mathrm{2c})$, $(2\ns\BM,\mathrm{2b})$, in each case $\GG$ is edgeless; or
\item[(ii)] $p=3$ and $\Delta$ is elementary abelian, with the possibilities given in Table~\ref{tbl:3covers}.
\end{enumerate}

\item[(c)] $G/Z(G)$ is either a simple group of Lie type in characteristic $p$ with exceptional Schur multiplier, or $G/Z(G)\cong \Alt(6)$ or $\Alt(7)$ with $p=3$, and the pairs $(G,t^G)$ for which $\GG$ is disconnected are listed in Table~\ref{tbl:exco}.
\end{enumerate}
\end{theorem}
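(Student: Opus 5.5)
This theorem is a classification, so the proof is a case-by-case analysis over a finite list of groups and classes of elements of prime order, organised so that each case is settled either by a general sufficient criterion for connectivity or by explicit computation in \textsf{GAP} and \textsc{Magma}. First I would develop general tools.

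\textbf{Basic reduction.} Since $G$ acts on $\GG$ by conjugation, the set stabilizer $N_G(\Lambda)$ of the component $\Lambda$ normalises $\Delta$, and $\GG$ is connected if and only if $N_G(\Lambda)=G$. If $\Delta$ is not normal in $G$, then $\GG$ is disconnected. Conversely, $\GG$ is connected precisely when every conjugate of $t$ lies in $\Lambda$.

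\textbf{Local criterion.} Let $x\in t^G\cap C_G(t)$ with $tx^{-1}\in t^G$. Then $\Lambda$ contains $\Lambda_{C_G(t)}$, the analogous component computed inside $C_G(t)$. The working criterion is therefore:
\begin{itemize}
\item if $t^G\cap C_G(t)$ generates a subgroup $K$, and the part of $\GG$ meeting $K$ is connected and contains enough conjugates of $t$ to generate a subgroup not contained in any maximal subgroup of $G$,
\item then $\Delta=G$ and, since $\Lambda$ is then $G$-invariant, $\GG$ is connected.
\end{itemize}
In practice I would compute, from the character table (class structure constants), which classes $x^G$ satisfy $x\in C_G(t)$, $t x^{-1}\in t^G$. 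This gives the neighbourhood of $t$. I would then check against the lists of maximal subgroups that $\langle t, \text{neighbours}\rangle$ escapes all maximal subgroups containing $t$. Even simpler, I would show that $N_G(\Lambda)$ contains $C_G(t)$ together with some element outside every maximal overgroup of $C_G(t)$.

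\textbf{Disconnected cases.} When no such escape exists, I would compute $\Delta$ directly. If $t$ has no neighbours, then $\Delta=\langle t\rangle$. This happens, for instance, whenever $p^2\nmid|G|$, or whenever $C_G(t)\cap t^G$ has no pair with the product condition. Otherwise I would identify $\Delta$ as a subgroup normalised by $C_G(t)$, giving the elementary abelian $p^2$ cases, $\J_2$, $\HS$, $\mathrm{O}_8^+(2)$, or $5_+^{1+4}$, by locating them inside $N_G(\Delta)$, which is a maximal subgroup.

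\textbf{Covers.} For the covers in (b) and (c), Lemma~\ref{lem:pnotcenter} handles central elements of order coprime to $p$. The remaining task is to lift classes to $p$-central covers and check whether products $tx^{-1}$ land in the correct lift of the class. This uses the character tables of the covers in the library. The fact that lifting can destroy edges explains the edgeless cases such as $2\ns\Fi_{22}$.

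\textbf{Main obstacle.} I expect the main obstacle to be the large groups, especially $\BM$, $\MM$ and their covers, and the classes with small centralisers relative to the class size. There, explicit permutation or matrix computations are infeasible. For these I would rely purely on structure constants and the known maximal subgroup lists, first trying to find two neighbours generating a subgroup, such as a full centraliser component, whose normaliser is already maximal yet not equal to $N_G(\Lambda)$.
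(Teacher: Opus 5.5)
There is a genuine gap. Your overall shape, a case analysis settled by general criteria plus computation, matches the paper's. However, the step that passes from local data at $t$ to a global conclusion fails as stated.

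\textbf{The local-to-global step.} Every neighbour of $t$ commutes with $t$, so $\langle t,\text{neighbours}\rangle$, and likewise $\langle t^G\cap C_G(t)\rangle$, lies in $C_G(t)<G$. Your test that this subgroup ``escapes all maximal subgroups containing $t$'' can therefore never succeed.

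What actually leaves $C_G(t)$ are the conjugators. If $u$ is adjacent to $t$ and $t^g=u$, then $\Lambda^g=\Lambda(u)=\Lambda$, so $g\in N_G(\Lambda)$. In fact $N_G(\Lambda)=\langle C_G(t),g_1,\dots,g_r\rangle$, where the $t^{g_i}$ run over $C_G(t)$-orbit representatives of the neighbours (Lemma~\ref{lem1}). Your ``even simpler'' sentence points this way, but you never state the mechanism.

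\textbf{Certifying disconnectedness.} Without that exact generation statement you have no way to prove the disconnected cases. Failing to find an escape does not show $\GG$ is disconnected. ``Locating $\Delta$ inside $N_G(\Delta)$, which is maximal'' is not a proof, and it is not true in general:
\begin{itemize}
\item for $(\ON,\mathrm{7B})$ one has $\Lambda=\Delta\cap t^G$, so $N_G(\Delta)=N_G(\Lambda)=[7^3].3.2$, which is not maximal;
\item for $(\HN,\mathrm{5E})$, $N_G(\Delta)$ is maximal but $N_G(\Lambda)$ has index $2$ in it.
\end{itemize}
The paper certifies each disconnected entry by computing $\langle C_G(t),g_1,\dots,g_r\rangle$ and finding it proper. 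Connectivity comes from Lemma~\ref{lem:maxcent} (maximal $N_G(\langle t\rangle)$ plus one neighbour outside $\langle t\rangle$) or Lemma~\ref{lem:centralizermaximal}. Also, $p^2\nmid|G|$ does not force $t$ to have no neighbours: a rational class of odd-order elements already gives edges inside $\langle t\rangle$. What $p^2\nmid|G|$ gives is $\Lambda\subseteq\langle t\rangle$, by Lemma~\ref{lem:notcyclic}.

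\textbf{Structure constants at odd primes.} Class structure constants of $G$ detect edges only when $p=2$, because two involutions whose product is an involution commute. For odd $p$, a factorisation $t=xy$ with $x,y\in t^G$ says nothing about $[x,t]=1$. So $n_G$ can at best bound the neighbourhood from above, as in Lemma~\ref{lem:cliques}. Your plan for $\BM$ and $\MM$ at odd primes via ``structure constants and maximal subgroup lists'' therefore produces no edges. The paper instead does two things:
\begin{itemize}
\item it exhibits pure elementary abelian subgroups in $p$-cores of maximal subgroups ($5^3.\mathrm{L}_3(5)$, $7^2.\mathrm{SL}_2(7)$, $13^2.\mathrm{SL}_2(13).4$, and the $5^2$ in $(5^2{:}4.\Sym(4))\times\Sym(5)$);
\item it imports edges from subgroups with known class fusion ($\Th<\BM$, $2\ns\BM<\MM$, $\Sym(3)\times\Th$).
\end{itemize}

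\textbf{Covers.} Checking that products land in the right lift produces edges but not connectivity. The missing tools are the two lifting lemmas:
\begin{itemize}
\item if the preimage of $\widetilde{\mathcal C}$ is a single class, then $\GG(\mathcal C)\cong\GG(\widetilde{\mathcal C})\cdot\overline{K_{|Z|}}$ (Lemma~\ref{lem:coversingleclass});
\item if the preimage splits, $n_G(\mathcal C_1,\mathcal C_1,\mathcal C_j)=0$ for $j>1$ lifts paths into $\mathcal C_1$ and makes the other lifts edgeless (Lemma~\ref{lem:covermulticlass}).
\end{itemize}
For $p=2$ the paper combines these with connectivity in the quotient (from \cite{GGL25} for the Lie-type groups). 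It adds explicit subgroup arguments for the $\mathrm{2B}$-lifts in $2\ns\twist\E_6(2)$ and $2^2\ns\twist\E_6(2)$. For $p=3$ there is no such shortcut, and the paper computes directly, using Lemma~\ref{lem1variant} for $3\ns\Fi_{24}'$.
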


Returning to the initial motivation, the recent article~\cite{delvallegill2025+} uses the following corollary, which follows from \cite[Corollary~2.13]{GG25} together with the explicit information in Propositions~\ref{prop:explicit1},~\ref{prop:explicit2}, and~\ref{prop:explicit3}.

\begin{corollary}
Suppose that $p$ is a prime dividing $|G|$ and $ (G,p)$ appears in Table~\ref{tbl:forbiddenpbinary}. Then every transitive binary action of $G$ has point stabilizer of order coprime to $p$. In particular, in such an action the Sylow $p$-subgroup of $G$ is semi-regular.
\end{corollary}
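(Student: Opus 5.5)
The corollary is obtained by combining \cite[Corollary~2.13]{GG25} with the explicit connectivity data in Propositions~\ref{prop:explicit1},~\ref{prop:explicit2} and~\ref{prop:explicit3}. As I understand it, \cite[Corollary~2.13]{GG25} says the following. Suppose $G$ acts transitively and binarily with point stabilizer $H$, and $p$ divides $|H|$. Then there is a conjugacy class $\mathcal C$ of elements of order $p$ with $\mathcal C\cap H\neq\emptyset$ such that the component group $\Delta(t)$ of some $t\in\mathcal C\cap H$ is contained in $H$. Typically this is a class for which $\GG(\mathcal C)$ is disconnected, and $\Delta(t)$ then sits inside a proper subgroup. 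The mechanism is that binary actions force $H$ to be closed under the adjacency relation of the Gill--Guillot graph. So the plan is to show that for each pair $(G,p)$ in Table~\ref{tbl:forbiddenpbinary}, no class of elements of order $p$ can satisfy this conclusion with $H$ a proper point stabilizer of a binary action.

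First, fix $(G,p)$ from the table and suppose for contradiction that $H$ is a point stabilizer of a transitive binary action with $p\mid |H|$. Choose $t\in H$ of order $p$ and set $\mathcal C=t^G$. If $\GG(\mathcal C)$ is connected, then $\Delta(t)=\gen{\mathcal C}=G$, since $G$ is quasisimple and $\mathcal C$ is a noncentral class. This would force $H=G$, contradicting that a transitive action on more than one point has proper stabilizer. (The trivial action should be excluded, or handled by the convention in \cite{GG25}.)

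So every class of order-$p$ elements meeting $H$ gives a disconnected graph. Theorem~\ref{thm:mainresults} and the explicit propositions then identify $\Delta(t)$. For the pairs in Table~\ref{tbl:forbiddenpbinary}, either every class of elements of order $p$ yields a connected graph, or the disconnected cases have a $\Delta(t)$ that is incompatible with being inside a binary point stabilizer. The latter requires the extra conditions of \cite[Corollary~2.13]{GG25}. For instance, $H$ must then contain $N_G(\Delta(t))$-type subgroups or a Sylow $p$-subgroup, and the explicit propositions show these generate $G$. So the work is a table-by-table check: for each listed $(G,p)$, read off from Propositions~\ref{prop:explicit1}--\ref{prop:explicit3} that the relevant classes give connected graphs, or that the resulting subgroup forced into $H$ is all of $G$.

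The final sentence follows directly. If $p\nmid|H|$, then every point stabilizer $H^g$ meets a Sylow $p$-subgroup $P$ trivially, so $P$ acts semiregularly.

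The main obstacle is matching the precise hypothesis of \cite[Corollary~2.13]{GG25} to the data in the explicit propositions, and confirming that every entry of Table~\ref{tbl:forbiddenpbinary} is covered. I expect this to be bookkeeping rather than new mathematics.
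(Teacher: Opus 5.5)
Your overall strategy (cite \cite[Corollary~2.13]{GG25}, then read off Propositions~\ref{prop:explicit1}--\ref{prop:explicit3}) is the same as the paper's one-line derivation. The connected case and the semiregularity remark are fine. The gap is in the step you call bookkeeping.

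Compare Table~\ref{tbl:forbiddenpbinary} with Tables~\ref{tbl:prop3.1}--\ref{tbl:prop3.3}. Exactly two listed pairs have a class of elements of order $p$ whose graph is disconnected:
\begin{itemize}
\item $(\M_{11},3)$: the unique class 3A has $\Delta(t)=3^2$, a Sylow $3$-subgroup, and $N_G(\Lambda(t))=3^2.[2^4]\cong \M_9{:}2$, a maximal subgroup.
\item $(\Fi_{22},5)$: the unique class 5A has $\Delta(t)=\mathrm{O}_8^+(2)$ and $N_G(\Lambda(t))=\mathrm{O}_8^+(2).\Sym(3)$, again maximal.
\end{itemize}
In both cases a Sylow $p$-subgroup, $\Delta(t)$ and $N_G(\Delta(t))$ all lie inside a single maximal subgroup. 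So your claim that "the explicit propositions show these generate $G$" is false for precisely the two entries that need more than connectivity. Every other entry is covered by your connected-graph argument alone.

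Granting your reading that $\Delta(t)\le H$, the $\Fi_{22}$ case can be finished by a step you do not mention. $\mathrm{O}_8^+(2)$ contains elements of order $3$, so $3\mid |H|$. Every class of elements of order $3$ in $\Fi_{22}$ has a connected graph, so applying the cited result at $p=3$ gives $H=G$.

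No such move exists for $(\M_{11},3)$. There $\Delta(t)=3^2$ is a $3$-group, and the containment $3^2\le H$ does not exclude $H=3^2$. Whatever rules this case out must be further content of \cite[Corollary~2.13]{GG25} beyond the containment you assume, and your proposal neither states it nor checks it against this case.

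There is also a smaller inconsistency. Your first step applies the cited result to an arbitrary $t\in H$, but your own formulation only supplies some suitable $t$. The argument should therefore be phrased as ruling out every class of elements of order $p$.

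As written, the proposal proves the corollary only for the entries in which every class of elements of order $p$ has a connected graph.
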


\begin{table}[ht]
\centering
\begin{tabular}{|ll|ll|ll|ll|}\hline
$G$            & $p$   & $G$       & $p$     & $G$        & $p$       & $G$         & $p$             \\ \hline
$\M_{11}$      & $2,3$ & $\HS$     & $2,3$   & $\ON$      & $2,3$     & $\Fi_{23}$  & $3,5$           \\
$\M_{12}$      & $2,3$ & $\J_3$    & $2$     & $\Co_3$    & $2,3$     & $\Co_1$     & $2,5,7$         \\
$\J_1$         & $2$   & $\M_{24}$ & $2,3$   & $\Co_2$    & --        & $\J_4$      & $2,3$           \\
$\M_{22}$      & $2,3$ & $\McL$    & $2$     & $\Fi_{22}$ & $3,5$     & $\Fi_{24}'$ & $2,3,5,7$       \\
$\J_2$         & $2$   & $\He$     & $2,3,5$ & $\HN$      & $2,3$     & $\BM$       & $3,5,7$         \\
$\M_{23}$      & $2,3$ & $\Ru$     & $2,3,5$ & $\Ly$      & $2$       & $\MM$       & $2,3,5,7,11,13$ \\
$\twist\F_4(2)'$ & $2,3$ & $\Suz$    & $2$     & $\Th$      & $2,3,5,7$ &             &   \\             \hline
\end{tabular}
\caption{Sporadic groups and primes that do not divide the stabilizer of a transitive binary action.}
\label{tbl:forbiddenpbinary}
\end{table}

The groups in Theorem~\ref{thm:mainresults} exhibit genuinely exceptional behaviour, so our arguments necessarily use specific information about the groups and thus the approach in this paper is distinct from that needed for the uniform infinite families of groups. We prove our results using a mixture of computational and theoretical techniques. The smallest of the sporadic groups may be dealt with directly, using computer code appearing in the supplementary file~\cite{codesup}, and thus much of this paper is dedicated to justifying the techniques which the code uses, and then explicitly using such techniques for the structural arguments required for the larger sporadic groups.

The structure of the paper is straightforward. In Section~\ref{Setup}, we briefly introduce the necessary definitions and notation, before stating and proving the technical lemmas which are used in proving the main results. We emphasize here that the results in Section~\ref{Setup} and the computer code in the supplementary materials provide a solid foundation for analysing Gill--Guillot graphs for groups other than those appearing in this paper. Section~\ref{s:simple} is dedicated to proving Theorem~\ref{thm:mainresults}(a), and Section~\ref{s:covers} handles the central extensions as listed in Theorem~\ref{thm:mainresults}(b) and (c).

\section{Setup and preliminary lemmas}\label{Setup}

Suppose that $G$ is a finite group and $p$ is a prime dividing $|G|$. Let $\mathcal C$ be a union of conjugacy classes (i.e., a normal subset) of $G$. As in the introduction, we define the Gill--Guillot graph $\GG(\mathcal C)$ to be the graph with vertex set $\mathcal C$ and distinct vertices $u$ and $v$ joined by an edge if and only if $u$ and $v$ commute and at least one of $uv^{-1}$ or $u^{-1}v$ lies in $\mathcal C$. Notice that the conjugation action of $G$ on the vertices of $\GG(\mathcal C)$ preserves the edge set and so induces a map from $G$ into the automorphism group of $\GG(\mathcal C)$. Let $t \in G$ have order $p$.

We will investigate two cases: the first case is $\mathcal C=t^G$. The second is where $\mathcal C$ is the \emph{rational class} of $t$, consisting of all non-identity powers of elements of $t^G$, and we write $\overline{t^G}$ for this set. Hence $ \overline{t^G}=\bigcup_{g\in G} (\gen{t}\setminus \{1\})^g$.
We generalize this to normal subsets $\mathcal C$ of $G$ to obtain the \emph{rational closure} $\overline {\mathcal C}$, which contains all the elements $y \in G$ with $\gen{y} =\gen{x}$ for some $x \in \mathcal C$.

For $t \in \mathcal C$, the connected component of $\GG(\mathcal C)$ containing $t$ is denoted by $\Lambda(t)$.
 The subgroup $\Delta(t):=\langle \Lambda(t)\rangle$ of $G$ is the \emph{component group} of $t$ in $\GG(\mathcal C)$, and the \emph{stabilizer} $N_G(\Lambda(t))$ of $\Lambda(t)$ is the group
\[ \{g\in G\mid t^g\in \Lambda(t)\}\le N_G(\Delta(t)).\]
We implicitly use that if $s\in \Lambda(t)$ then $\Lambda(s)=\Lambda(t)$ and if $g \in G$ then $\Lambda(t)^g=\Lambda(t^g)$. Thus, $N_G(\Lambda(t))$ depends only on the component $\Lambda(t)$ and not on the choice of $t$ in this component.

\begin{lemma}\label{lem:rat1} Suppose $\mathcal C$ is a normal subset of $G$. If $t \in \mathcal C$ then $\gen{t}\setminus \{1\}$ lies in the connected component of $\GG(\overline{\mathcal C})$ which contains $t$.
\end{lemma}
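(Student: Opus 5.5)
The plan is to connect $t$ to each non-trivial power of $t$ by an explicit path inside $\gen{t}$, using that the edge condition for two powers of $t$ only involves another power of $t$. Throughout I use the standing assumption of this section that $t$ has prime order $p$. Then every element of $\gen{t}\setminus\{1\}$ generates $\gen{t}$. Since $t \in \mathcal C$, the definition of rational closure puts all of $\gen{t}\setminus\{1\}$ in $\overline{\mathcal C}$, so these elements are all vertices of $\GG(\overline{\mathcal C})$.

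If $p=2$ then $\gen{t}\setminus\{1\}=\{t\}$ and there is nothing to prove, so assume $p\ge 3$. Consider the sequence $t, t^2, t^3, \dots, t^{p-1}$. These are exactly the elements of $\gen{t}\setminus\{1\}$, and they are pairwise distinct. For $1\le j\le p-2$, the elements $u=t^j$ and $v=t^{j+1}$ commute because both lie in the cyclic group $\gen{t}$. Moreover $uv^{-1}=t^{-1}$, which generates $\gen{t}$ and hence lies in $\overline{\mathcal C}$. So by the definition of $\GG(\overline{\mathcal C})$, the vertices $t^j$ and $t^{j+1}$ are adjacent. It follows that $t=t^1, t^2,\dots,t^{p-1}$ is a path in $\GG(\overline{\mathcal C})$. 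Hence every element of $\gen{t}\setminus\{1\}$ lies in $\Lambda(t)$.

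There is no genuine obstacle here. The one point needing care is checking that each intermediate power $t^j$ really is a vertex, that is, lies in $\overline{\mathcal C}$. This is where prime order is used, since it guarantees $\gen{t^j}=\gen{t}$.

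If one wanted the statement for $t$ of arbitrary order, I would instead run the same path only through the generators $t^k$ of $\gen{t}$, since only those are guaranteed to lie in $\overline{\mathcal C}$. Two such generators $t^a$ and $t^b$ are adjacent whenever $t^{a-b}$ is also a generator. I would then argue via the unit group of $\mathbb{Z}/n\mathbb{Z}$ that these generators form a connected set. In the setting of the paper, however, the prime-order argument above suffices.
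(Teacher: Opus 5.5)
Your argument is correct and is essentially the paper's. Both rest on one observation: for distinct $t^a,t^b\in\gen{t}\setminus\{1\}$, the element $t^at^{-b}$ is a non-trivial power of $t$ and so lies in $\overline{\mathcal C}$. The paper uses it to join every $s\in\gen{t}\setminus\{1,t\}$ directly to $t$, which yields the $K_{p-1}$ on the powers of $t$ that Lemma~\ref{lem:cliques} later relies on. You use it only along the chain $t,t^2,\dots,t^{p-1}$, which is enough for the statement.

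Your closing aside about arbitrary order is, however, false when the order $n$ is even and $n>2$. Every generator $t^a$ of $\gen{t}$ then has $a$ odd, so $t^{a-b}$ is never a generator and your criterion gives no edges among the generators. For example, $t$ and $t^{-1}$ are not adjacent when $n=4$ and $\mathcal C=t^G$. This does not affect your prime-order proof.
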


\begin{proof} We can assume that $t$ does not have order $2$. Let $s \in \gen{t}\setminus \{t,1\}$. Then $st^{-1} \in \gen{t}\setminus \{1\} \subseteq \overline{\mathcal C}$. Hence $s$ is adjacent to $t$ in $\GG(\overline{\mathcal C})$. This proves the claim.\end{proof}

Observe that $\Delta(t)\leq N_G(\Lambda(t))$: indeed, each generator $s$ of $\Delta(t)$ is a vertex of $\Lambda(t)$ which fixes itself and thus stabilizes the whole component.

Since we are interested in the connectivity of $\GG(\mathcal C)$, the importance of the following lemma is evident. It applies in particular in the case where $\mathcal C$ is a conjugacy class or a rational class. (That this applies to rational classes follows from Lemma~\ref{lem:rat1}.)

\begin{lemma}\label{lem:connectedstabilizer} Let $\mathcal C=\bigsqcup_{i=1}^m\mathcal{C}_i$ be a union of conjugacy classes of elements of $G$. Let $\Lambda$ be a connected component of $\GG(\mathcal C)$, and suppose that $\Lambda\cap\mathcal{C}_i\ne\emptyset$ for all $1\leq i\leq m$. If $N_G(\Lambda)=G$ or $\Delta:=\langle\Lambda\rangle=G$ then $\GG(\mathcal C)$ is connected.
\end{lemma}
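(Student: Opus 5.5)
First reduce the case $\Delta=G$ to the case $N_G(\Lambda)=G$. The paper has already observed that $\Delta\le N_G(\Lambda)$, since each generator $s\in\Lambda$ of $\Delta$ fixes the vertex $s$ under conjugation and therefore stabilizes the component containing it. So if $\Delta=G$ then $N_G(\Lambda)\ge\Delta=G$, and it suffices to treat the hypothesis $N_G(\Lambda)=G$.

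Now assume $N_G(\Lambda)=G$, i.e. $\Lambda^g=\Lambda$ for every $g\in G$. Take an arbitrary vertex $x\in\mathcal C$; we show $x\in\Lambda$. Since $\mathcal C$ is the disjoint union of the classes $\mathcal C_i$, we have $x\in\mathcal C_i$ for some $i$. By hypothesis $\Lambda\cap\mathcal C_i\neq\emptyset$, so pick $y\in\Lambda\cap\mathcal C_i$. Because $\mathcal C_i$ is a single conjugacy class, $x=y^g$ for some $g\in G$. Conjugation by $g$ is a graph automorphism, so it maps the component $\Lambda=\Lambda(y)$ onto the component $\Lambda(y^g)=\Lambda(x)$. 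Hence $\Lambda(x)=\Lambda^g=\Lambda$, and so $x\in\Lambda$.

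Therefore $\Lambda=\mathcal C$, and $\GG(\mathcal C)$ has the single component $\Lambda$, so it is connected. There is no real obstacle here. The only point needing care is that conjugation preserves the edge relation; this holds because commutation and the condition $uv^{-1}\in\mathcal C$ or $u^{-1}v\in\mathcal C$ are both invariant under conjugation when $\mathcal C$ is normal, as noted in the setup.
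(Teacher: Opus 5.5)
Your proof is correct and takes essentially the same approach as the paper. It reduces the case $\Delta=G$ via $\Delta\le N_G(\Lambda)$, then argues that a $G$-invariant component meeting every class $\mathcal C_i$ must contain each whole class $t_i^G=\mathcal C_i$, which is exactly the paper's $t_i^G\subseteq\Lambda$ step written out pointwise.
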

\begin{proof} We prove the result for $N_G(\Lambda)$; the result for $\Delta$ follows from the discussion above. For each $i$, pick some $t_i\in\Lambda\cap\mathcal{C}_i$. Since $G$ stabilizes $\Lambda$, it follows that $t_i^G \subseteq \Lambda$, whence $\mathcal{C}=\bigsqcup_{i=1}^mt_i^G\subseteq \Lambda$, as desired.
\end{proof}

We repeatedly use the fact that if $t$ is a vertex in $\GG(\mathcal C)$ then $C_G(t)$ leaves $\Lambda(t)$ invariant.

\begin{lemma}\label{lem:componentgroup} Suppose that $t\in G$, and set $\mathcal C=t^G$ and $H=N_G(\Lambda(t))$. Then $|\Lambda(t)|={\nolinebreak|H:C_G(t)|}$ and $\Lambda(t)=t^{H}$. In particular, $\Delta(t)=\gen{t^H}$.
\end{lemma}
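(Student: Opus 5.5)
The plan is to show that $\Lambda(t)$ is a single orbit of $H=N_G(\Lambda(t))$ acting by conjugation, and then to count it with the orbit--stabilizer theorem.

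First we check that $H$ is a subgroup. Since conjugation by $G$ preserves edges, each $g\in G$ maps the component $\Lambda(t)$ onto the component $\Lambda(t^g)$. Components are either equal or disjoint, so $g\in H$ precisely when $\Lambda(t)^g=\Lambda(t)$, that is, when $g$ stabilizes the component setwise. In particular $H$ is a subgroup of $G$.

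Next we show $\Lambda(t)=t^H$. For $h\in H$ the definition gives $t^h\in\Lambda(t)$, so $t^H\subseteq\Lambda(t)$. For the reverse inclusion, take $s\in\Lambda(t)$. Because $\mathcal C=t^G$, we can write $s=t^g$ for some $g\in G$, and then $t^g=s\in\Lambda(t)$ says exactly that $g\in H$. Hence $s\in t^H$, and so $\Lambda(t)=t^H$.

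The size of $\Lambda(t)$ now follows from orbit--stabilizer. The stabilizer of $t$ in $H$ under conjugation is $C_H(t)=C_G(t)\cap H$. We observed before the lemma that $C_G(t)$ leaves $\Lambda(t)$ invariant; directly, for $c\in C_G(t)$ we have $t^c=t\in\Lambda(t)$, so $C_G(t)\le H$. Therefore $C_H(t)=C_G(t)$ and $|\Lambda(t)|=|t^H|=|H:C_G(t)|$. Finally, $\Delta(t)=\gen{\Lambda(t)}=\gen{t^H}$ by the definition of the component group. No step here is a real obstacle; the only point needing care is the identification of $H$ with the setwise stabilizer, which uses that $G$ permutes the components of $\GG(\mathcal C)$.
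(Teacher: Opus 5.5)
Your proposal is correct and follows essentially the same route as the paper. Both arguments show $\Lambda(t)=t^H$ by writing any $s\in\Lambda(t)$ as $t^g$ and observing $g\in H$, then apply orbit--stabilizer; you also make explicit that $H$ is the setwise stabilizer, and hence a subgroup, and that $C_G(t)\le H$, both of which the paper leaves implicit.
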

\begin{proof}
It suffices to show that $\Lambda(t)=t^H$, from which the rest follows from the Orbit-Stabilizer theorem. First, suppose that $u\in \Lambda(t)$. Then there exists $g\in G$ such that $t^g=u$. Hence $\Lambda(t)^g=\Lambda(t^g)= \Lambda(u)= \Lambda(t)$ and so $g\in H$ and $u\in t^H$. Conversely, if $u\in t^H$ then there exists $h\in H$ such that $u=t^h\in \Lambda(t)^h=\Lambda(t)$.
\end{proof}

Another observation is that for $\mathcal C$ a normal subset of $G$ and $t\in \mathcal C$, the vertices adjacent to $t$ are in $C_G(t)$. Hence $C_G(t)$ permutes these vertices by conjugation. The next two lemmas are stated in more generality than we shall need. For both we fix the following set-up: let $\mathcal C=t^G$ be a conjugacy class in $G$, let $\Gamma$ be a subgraph of $\mathrm{Com}(G,\mathcal{C})$ on which $G$ acts by automorphisms (as is the case for $\GG(t^G)$), and define $\Sigma$ to be the connected component of $\Gamma$ containing $t$. As for the Gill--Guillot graph, we shall write $N_G(\Sigma)$ to denote the stabilizer of the component $\Sigma$.

\begin{lemma}\label{lem1} Suppose that $u_1,\dots,u_r$ are representatives of the $C_G(t)$-orbits on the vertices of $\Gamma$ adjacent to $t$. Let $g_1,\dots,g_r$ be elements of $G$ such that $t^{g_i}=u_i$. Then $N_G(\Sigma)= \gen{C_G(t),g_1,\dots,g_r}$.
\end{lemma}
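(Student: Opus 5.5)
We prove the two inclusions separately. Set $K=\gen{C_G(t),g_1,\dots,g_r}$ and $H=N_G(\Sigma)$.

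\emph{The inclusion $K\le H$.} First, $C_G(t)\le H$, since any $c\in C_G(t)$ is a graph automorphism fixing the vertex $t$, hence maps $\Sigma$ to the component containing $t^c=t$, namely $\Sigma$. Second, each $u_i$ is adjacent to $t$, so $u_i\in\Sigma$. Then $\Sigma^{g_i}$ is the component containing $t^{g_i}=u_i$, which is $\Sigma$, so $g_i\in H$.

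\emph{The inclusion $H\le K$.} The key step is to show that the whole vertex set of $\Sigma$ lies in $t^K$.
\begin{enumerate}
\item[(1)] The neighbours of $t$ lie in $t^K$. Let $v$ be adjacent to $t$. Then $v=u_i^c$ for some $i$ and some $c\in C_G(t)$, so $v=t^{g_ic}$ with $g_ic\in K$.
\item[(2)] $t^K$ is closed under taking neighbours. Suppose $w=t^k$ with $k\in K$, and let $v$ be adjacent to $w$. Since $G$ acts by graph automorphisms, $v^{k^{-1}}$ is adjacent to $w^{k^{-1}}=t$. By (1), $v^{k^{-1}}=t^{k'}$ for some $k'\in K$, so $v=t^{k'k}\in t^K$.
\item[(3)] Since $\Sigma$ is connected and contains $t$, induction along paths from $t$ using (2) gives $\Sigma\subseteq t^K$.
\end{enumerate}

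Now let $h\in H$. Then $t^h\in\Sigma\subseteq t^K$, so $t^h=t^k$ for some $k\in K$. Hence $hk^{-1}\in C_G(t)\le K$, and therefore $h\in K$. This gives $H\le K$, and with the first inclusion, $H=K$.

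There is no real obstacle here. The only point needing care is the closure step (2), which uses that conjugation acts by automorphisms of $\Gamma$ to translate the local description of the neighbours of $t$ to every vertex of $t^K$.
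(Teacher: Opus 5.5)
Your proof is correct and is essentially the paper's argument. The paper takes $g\in N_G(\Sigma)\setminus\langle C_G(t),g_1,\dots,g_r\rangle$ with $d(t,t^g)$ minimal, writes the first vertex of a shortest path as $v_1=t^{g_ic^{-1}}$ with $c\in C_G(t)$, and obtains $gcg_i^{-1}$ at smaller distance: this is the minimal-counterexample form of your forward induction showing $\Sigma\subseteq t^K$ and then using $hk^{-1}\in C_G(t)$.
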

\begin{proof}
That $L:=\langle C_G(t),g_1,\dots,g_r\rangle\leq N_G(\Sigma)$ is clear; suppose for contradiction that the containment is proper. Define $d(x,y)$ to be the length of a shortest path between $x$ and $y$ in $\Sigma$. Choose $g \in N_G(\Sigma)\setminus L$ with $d(t,t^g)$ minimal, and let $t=v_0,v_1,\dots, v_n=t^g$ be a shortest path in $\Sigma$. Then there exists $c\in C_G(t)$ such that $v_1^c= u_i$ for some $i$ and so $v_1^{c}=t^{g_i}$. Thus
\[ n-1=d(v_1,t^g)=d(t^{g_ic^{-1}},t^g)=d(t,t^{gcg_i^{-1}}),\]
and so $gcg_i^{-1} \in L$ by the choice of $g$. But then $g\in L$, a contradiction. Hence $L =N_G(\Sigma)$.
\end{proof}

When we compute in the quasisimple group $3\ns\Fi_{24}'$, to decrease the time taken for the computation it is useful to be able to calculate orbit representatives in a smaller subgroup than $C_G(t)$. Hence we need a modest variant of Lemma~\ref{lem1}.

\begin{lemma}\label{lem1variant} Let $p$ be a prime and suppose that $t\ne 1 $ is a $p$-element, $S \in \Syl_p(C_G(t))$ and $ w_1, \dots, w_\ell $ are representatives of the $S$-orbits on the neighbours of $t$ in $\Gamma$ contained in $S$. Assume that, for $1 \le i \le \ell$, $h_i \in G$ is such that $t^{h_i}=w_i$. Then the stabilizer of $\Sigma$ is $\gen{C_G(t), h_1, \dots,h_\ell}$.
\end{lemma}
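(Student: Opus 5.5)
The plan is to reduce this directly to Lemma~\ref{lem1}. The only new point is that the $S$-orbits on neighbours of $t$ lying in $S$ already meet every $C_G(t)$-orbit on the neighbours of $t$. Set $L=\gen{C_G(t),h_1,\dots,h_\ell}$.

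First, I will check that $L\le N_G(\Sigma)$. We have $C_G(t)\le N_G(\Sigma)$, since $C_G(t)$ fixes the vertex $t$ and acts by graph automorphisms. Each $w_i$ is adjacent to $t$, so $t^{h_i}=w_i\in\Sigma$. Hence $\Sigma^{h_i}=\Sigma(t^{h_i})=\Sigma(w_i)=\Sigma$, and so $h_i\in N_G(\Sigma)$.

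For the reverse inclusion, let $u$ be any neighbour of $t$ in $\Gamma$. Since $\Gamma$ is a subgraph of $\mathrm{Com}(G,t^G)$, $u$ commutes with $t$, so $u\in C_G(t)$. Also $u$ is conjugate to $t$, so $u$ is a $p$-element. Thus $\gen{u}$ is a $p$-subgroup of $C_G(t)$, and by Sylow's theorem there is $c\in C_G(t)$ with $\gen{u}^c\le S$, that is, $u^c\in S$. Now $u^c$ is again a neighbour of $t$, because $c$ fixes $t$ and preserves adjacency. So $u^c$ is a neighbour of $t$ contained in $S$, and therefore it is $S$-conjugate to some $w_i$. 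Consequently every $C_G(t)$-orbit on the neighbours of $t$ contains some $w_i$.

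We can therefore choose indices $i_1,\dots,i_r$ such that $w_{i_1},\dots,w_{i_r}$ form a set of representatives of the $C_G(t)$-orbits on the neighbours of $t$. Applying Lemma~\ref{lem1} with $u_j=w_{i_j}$ and $g_j=h_{i_j}$ gives $N_G(\Sigma)=\gen{C_G(t),h_{i_1},\dots,h_{i_r}}\le L$. Combined with the first step, this yields $N_G(\Sigma)=L$. There is no real obstacle here; the only point needing care is the Sylow step, which uses that neighbours of $t$ lie in $C_G(t)$ and are $p$-elements.
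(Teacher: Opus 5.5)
Your proposal is correct and follows essentially the paper's route: use Sylow's theorem in $C_G(t)$ to conjugate an arbitrary neighbour of $t$ into $S$, then conclude with Lemma~\ref{lem1}. The only difference is cosmetic. You apply Lemma~\ref{lem1} directly with a subset of the $w_i$ as the $C_G(t)$-orbit representatives. The paper instead fixes arbitrary representatives $u_i$ and builds elements $h_jk^{-1}g^{-1}\in L$ sending $t$ to each $u_i$, conjugating $\gen{t,u}$ rather than $\gen{u}$.
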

\begin{proof} Set $L=\gen{C_G(t), h_1, \dots,h_\ell}$. Obviously $L$ stabilizes $\Sigma$. Let $u_1,\dots,u_r$ be $C_G(t)$-orbit representatives of the vertices adjacent to $t$ and fix some $u=u_i$. Then $u \in C_G(t)$ and $\gen{t,u} $ is a $p$-group. By Sylow's Theorem, there exists $g$ in $C_G(t)$ such that $\gen{t,u}^g= \gen{t,u^g} \le S$.

Since $u^g$ is adjacent to $t$, there exists $k \in S$ such that $u^{gk}=w_j$ for some $1\le j \le \ell$. By assumption, there exists $h_j$ in $L$ such that $t^{h_j}= w_{j}$. Hence $t^{h_jk^{-1}g^{-1}}=u=u_i$. Since $k^{-1}g^{-1} \in C_G(t)$, we deduce that $h_jk^{-1}g^{-1} \in L$. But then for each $1\le i\le r$, $L$ contains an element $\gamma_i=h_jk^{-1}g^{-1}$ such that $t^{\gamma_i}= u_i$. Thus $L$ contains the stabilizer of $\Sigma$ by Lemma~\ref{lem1}. As $L$ stabilizes $\Sigma$, this concludes the proof.
\end{proof}

\begin{lemma}\label{lem:notcyclic} Suppose that $t\in G$ has order $p$ and define $\mathcal C=t^G$. If the Sylow $p$-subgroups of $G$ are cyclic or generalized quaternion (so $p=2$ in the latter case) then $\Lambda(t)\subset \langle t\rangle$ and $|\Lambda(t)|\le p-1$.
\end{lemma}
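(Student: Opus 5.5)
The key observation is that in a $p$-group whose Sylow subgroups are cyclic or generalized quaternion there is a unique subgroup of order $p$. We will use this to show that every neighbour of $t$ in $\GG(t^G)$ generates the same subgroup $\gen{t}$. An induction along paths then traps the whole component $\Lambda(t)$ inside $\gen{t}$.

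First suppose $u \in t^G$ is adjacent to $t$. By definition $u$ commutes with $t$, and both have order $p$. Hence $\gen{t,u}$ is an abelian $p$-group, contained in some Sylow $p$-subgroup $P$ of $G$. Since $P$ is cyclic or generalized quaternion, it has a unique subgroup of order $p$. (For generalized quaternion groups, $p=2$ and the unique involution is central.) As $\gen{t}$ and $\gen{u}$ are both subgroups of order $p$ in $P$, we get $\gen{u}=\gen{t}$, so $u \in \gen{t}$.

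Next we propagate this along paths. Let $t=v_0,v_1,\dots,v_n$ be a path in $\GG(t^G)$. Applying the previous step with $v_{i}$ in place of $t$ gives $\gen{v_{i+1}}=\gen{v_i}$; this is valid because $v_i\in t^G$ also has order $p$ and the hypothesis on Sylow subgroups is conjugation invariant. By induction $\gen{v_n}=\gen{t}$, so $\Lambda(t)\subseteq \gen{t}\setminus\{1\}$. This set has $p-1$ elements, which gives $|\Lambda(t)|\le p-1$. The only points needing care are that adjacency requires commuting (which is built into the definition), and that elements of $\mathcal C$ are nontrivial, so $1\notin\Lambda(t)$. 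Since $t\ne 1$, the containment in $\gen{t}$ is proper, which accounts for the strict symbol $\subset$ in the statement.

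There is no real obstacle here. The only fact needed is that cyclic and generalized quaternion $p$-groups have a unique subgroup of order $p$; this is standard, since generalized quaternion groups have a unique involution. Note that when $p=2$ the conclusion is $\Lambda(t)=\{t\}$, so the graph is edgeless.
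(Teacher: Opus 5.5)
Your proof is correct and follows the same route as the paper: a Sylow $p$-subgroup containing $\gen{t,u}$ has a unique subgroup of order $p$, so every neighbour of $t$ lies in $\gen{t}$. Your induction along paths simply spells out the step from neighbours of $t$ to the whole component $\Lambda(t)$, which the paper leaves implicit.
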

\begin{proof} Let $S$ be a Sylow $p$-subgroup containing $t$. Since $S$ is cyclic or generalized quaternion, $\langle t\rangle$ is the unique cyclic subgroup of $S$ of order $p$. Therefore, the only edges in $\Lambda(t)$ are between elements in $\langle t\rangle$. Hence $\Lambda(t) \subset \langle t\rangle$ and $|\Lambda(t)|\le p-1$.
\end{proof}

We also present a proof of the result mentioned in the introduction which relates connectivity of certain Gill--Guillot graphs to strongly $p$-embedded subgroups. Recall that a proper subgroup $H$ of a finite group $G$ is called \emph{strongly $p$-embedded} if $p$ is a prime which divides the order of $H$ but does not divide the order of $H\cap H^g$ for any $g\in G\setminus H$. A subgroup is \emph{strongly embedded} if it is strongly 2-embedded.

\begin{lemma}\label{lem:prophelp} Suppose that $G$ has a strongly $p$-embedded subgroup and that $\mathcal C$ is a normal subset of non-trivial $p$-elements. Then both $\mathrm{Com}(G,\mathcal C)$ and $\GG(\mathcal C)$ are disconnected.
\end{lemma}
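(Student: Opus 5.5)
Since $\GG(\mathcal C)$ is a subgraph of $\mathrm{Com}(G,\mathcal C)$ on the same vertex set, it suffices to show that $\mathrm{Com}(G,\mathcal C)$ is disconnected. Adding edges can only merge components, so a disconnected commuting graph forces the Gill--Guillot graph to be disconnected too. I will assume $\mathcal C$ is non-empty; otherwise there is nothing to prove, or the statement is vacuous depending on convention.

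Let $H$ be a strongly $p$-embedded subgroup of $G$. The key step is to show that the set $\mathcal C\cap H$ is closed under adjacency in $\mathrm{Com}(G,\mathcal C)$.

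\emph{Preliminary fact.} Every non-trivial $p$-element $x\in H$ satisfies $C_G(x)\le H$. To see this, take $g\in C_G(x)$. Then $x=x^g\in H\cap H^g$, so $p$ divides $|H\cap H^g|$. By the definition of strongly $p$-embedded, this forces $g\in H$.

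\emph{Closure.} Now let $x\in\mathcal C\cap H$ and let $y\in\mathcal C$ commute with $x$. Then $y\in C_G(x)\le H$, so every neighbour of a vertex in $\mathcal C\cap H$ again lies in $\mathcal C\cap H$. Hence $\mathcal C\cap H$ is a union of connected components of $\mathrm{Com}(G,\mathcal C)$.

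It remains to check that $\mathcal C\cap H$ is neither empty nor all of $\mathcal C$.

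\emph{Non-empty.} Take $x\in\mathcal C$. Since $p$ divides $|H|$ and $H\le G$, a Sylow $p$-subgroup $P$ of $H$ is non-trivial. Moreover $P\le P^*\in\Syl_p(G)$, and the preliminary fact gives $N_G(P)\le H$, because $N_G(P)$ normalises $Z(P)\ne1$; more directly, $P\le H\cap H^g$ for $g\in N_G(P)$.

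I claim $P\in\Syl_p(G)$. Suppose not. Then $P<N_{P^*}(P)$. Any $g\in N_{P^*}(P)$ satisfies $P\le H\cap H^g$, so $N_{P^*}(P)\le H$, contradicting the maximality of $P$ as a $p$-subgroup of $H$.

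So $H$ contains a Sylow $p$-subgroup of $G$. The $p$-element $x$ lies in some Sylow $p$-subgroup, hence some conjugate $x^g$ lies in $H$. As $\mathcal C$ is normal, $x^g\in\mathcal C\cap H$.

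\emph{Not everything.} Since $H<G$, pick $g\in G\setminus H$. For $x\in\mathcal C\cap H$ we have $x^g\in\mathcal C$. If $x^g$ were in $H$, then $x^g\in H\cap H^g$, which is impossible because $H\cap H^g$ is a $p'$-group. So $x^g\in\mathcal C\setminus H$, and this set is non-empty.

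Therefore $\mathrm{Com}(G,\mathcal C)$ is disconnected, and hence so is $\GG(\mathcal C)$. The only mildly delicate point is that $H$ contains a full Sylow $p$-subgroup, which is handled by the normaliser argument above.
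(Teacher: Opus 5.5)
Your proof is correct and follows essentially the same route as the paper: both use the centralizer argument to show that the component of a vertex of $\mathcal C\cap H$ stays inside $H$. Your ending is direct (a conjugate $x^g$ with $g\notin H$ lies in $\mathcal C$ but outside $H$), whereas the paper argues by contradiction that connectivity would force $\gen{\mathcal C}\le H\cap H^g$ for all $g$. You also prove that $H$ contains a Sylow $p$-subgroup, which the paper takes as known. One small quibble: your aside that $N_G(P)\le H$ follows from the centralizer fact ``because $N_G(P)$ normalises $Z(P)$'' is not valid, since normalising $Z(P)$ does not mean centralising any element of it. Your ``more directly'' argument, $P\le H\cap H^g$, is correct and is all you need.
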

\begin{proof} Since $\GG(\mathcal C)$ is a subgraph of $\mathrm{Com}(G,\mathcal C)$ it suffices to show that $\mathrm{Com}(G,\mathcal C)$ is disconnected. Suppose that $L$ is strongly $p$-embedded in $G$. Then $L$ contains a Sylow $p$-subgroup and so $\mathcal{C} \cap L\ne\emptyset$. Let $t \in \mathcal C \cap L$ and define $\Lambda$ to be the connected component of $\mathrm{Com}(G,\mathcal C)$ which contains $t$. We claim that $\Lambda \subset L$. Suppose that $s\in \Lambda \cap L$ and $r\in \Lambda\setminus L$ is adjacent to $s$. Then $s=s^r \in L \cap L^r$. Since $L$ is strongly $p$-embedded and $\gen{s}$ is a non-trivial $p$-group, we have $r \in L$, a contradiction. Therefore $\Lambda \subset L$.

Assume that $\mathrm{Com}(G,\mathcal C)$ is connected. Then $\mathcal C=\Lambda$. But then for all $g\in G$, $L\cap L^g \ge \gen{\mathcal C}$ contradicting $L$ being strongly $p$-embedded in $G$. This proves the result.
\end{proof}

\begin{proposition}\label{prop:spe} Suppose that $\mathcal C$ is the set of elements of $G$ of order $p$. Then $\GG(\mathcal C)$ is disconnected if and only if $G$ has a strongly $p$-embedded subgroup. Moreover, the pairs $(G,p)$ where $G$ is a finite simple group and $p$ is a prime such that $G$ has non-cyclic Sylow $p$-subgroups and a strongly $p$-embedded subgroup, are listed in Table~\ref{tbl:spe}.
\end{proposition}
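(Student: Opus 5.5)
Write $\mathcal C$ for the set of all elements of order $p$ in $G$; this is a normal subset closed under taking non-trivial powers. The implication \emph{strongly $p$-embedded $\Rightarrow$ disconnected} is exactly Lemma~\ref{lem:prophelp}, so only the converse needs an argument. Suppose $\GG(\mathcal C)$ is disconnected, fix $t\in\mathcal C$, and let $H=N_G(\Lambda(t))$. I will show that $H$ is strongly $p$-embedded in $G$.

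Properness and $p\mid |H|$ are immediate. Since $t\in\Delta(t)\le H$, we have $p\mid|H|$. If $H=G$, then $\Lambda(t)$ would contain every $G$-conjugate of each of its elements. Every $u\in\mathcal C$ is connected to $t$ through a Sylow argument (next paragraph), and then Lemma~\ref{lem:connectedstabilizer} would force connectivity. So $H<G$ once the component is shown to contain a representative of each class, which the Sylow step below provides.

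The key step is that any two elements of order $p$ lying in a common $p$-subgroup $P$ are joined in $\GG(\mathcal C)$. Take $x,y\in P$ of order $p$, and pick $z\in Z(P)$ of order $p$. I claim $x$ is joined to $z$. If $x\in\langle z\rangle$, use Lemma~\ref{lem:rat1}. Otherwise $x$ and $z$ commute and $xz^{-1}$ has order $p$, since $x$ and $z$ commute and $xz^{-1}\ne1$. Thus $xz^{-1}\in\mathcal C$ and $x\sim z$ is an edge. Similarly $y$ is joined to $z$. Consequently every Sylow $p$-subgroup $S$ has $S\cap\mathcal C$ inside a single component. All elements of order $p$ in $H$ then lie in $\Lambda(t)$: if $S\in\Syl_p(G)$ contains $t$, then $S\cap \mathcal C\subseteq\Lambda(t)$, so $S\le H$. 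Any $p$-subgroup $Q$ of $H$ is $H$-conjugate into $S$, hence its elements of order $p$ lie in $\Lambda(t)^h=\Lambda(t)$. So $\Lambda(t)=\mathcal C\cap H$. In particular, each conjugacy class of order-$p$ elements meets $S\subseteq H$ and hence $\Lambda(t)$. Therefore $H=G$ would give connectivity by Lemma~\ref{lem:connectedstabilizer}, and so $H<G$.

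Now let $g\in G\setminus H$ and suppose $p\mid|H\cap H^g|$. Choose $x\in H\cap H^g$ of order $p$. Then $x\in\Lambda(t)$, and $x^{g^{-1}}\in H$ has order $p$, so $x^{g^{-1}}\in\Lambda(t)$ as well. Hence $x\in\Lambda(t)^g$. Two components sharing a vertex coincide, so $\Lambda(t)^g=\Lambda(t)$ and $g\in H$, a contradiction. Thus $H$ is strongly $p$-embedded.

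For the table, I would simply cite the known classification of simple groups with a strongly $p$-embedded subgroup. For $p=2$ this is Bender's theorem. For odd $p$ it follows from the classification of finite simple groups, as recorded for instance in Gorenstein--Lyons--Solomon. I would then filter out the cases with cyclic Sylow $p$-subgroups. The main obstacle is only the bookkeeping in that citation; the group-theoretic argument is the Sylow connection step above.
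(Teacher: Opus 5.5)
Your proof is correct, and its core is the paper's: an element of order $p$ in $Z(S)$ is adjacent to every other element of order $p$ in $S$, so $H=N_G(\Lambda(t))$ contains a Sylow $p$-subgroup. The converse is Lemma~\ref{lem:prophelp} in both versions, and your citation for the table matches the paper's.

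The difference is the final step.
\begin{itemize}
\item The paper notes that $\GG(\mathcal C)=\mathrm{Com}(G,\mathcal C)$. It then observes that $N_G(S)$ and $C_G(s)$, for all $s\in\mathcal C\cap S$, stabilize $\Lambda(t)$. Finally it invokes the standard criterion \cite[Proposition~17.11]{GLS2} to conclude that the proper subgroup $N_G(\Lambda(t))$ is strongly $p$-embedded.
\item You prove strong $p$-embedding directly. From $S\le H$ and Sylow's theorem inside $H$ you get $\Lambda(t)=\mathcal C\cap H$. Then any element of order $p$ in $H\cap H^g$ lies in $\Lambda(t)\cap\Lambda(t)^g$, which forces $g\in H$.
\end{itemize}
Your route is self-contained and gives the extra description of the component as exactly the set of elements of order $p$ in its stabilizer. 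The paper's route is shorter but depends on the external criterion.

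One presentational point: your second paragraph asserts $H<G$ before the Sylow step that justifies it. The argument reads more cleanly if you state $H<G$ only after establishing $S\cap\mathcal C\subseteq\Lambda(t)$.
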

\begin{proof} First observe that if $\{x,y\}$ is an edge in $\mathrm{Com}(G,\mathcal{C})$, then $x$ and $y^{-1}$ commute, whence $xy^{-1}$ has order $p$. That is, $xy^{-1}\in\mathcal{C}$. Thus $\GG(\mathcal C)=\mathrm{Com}(G,\mathcal{C})$. Now, let $S \in \mathrm{Syl}_p(G)$, and pick $t \in \mathcal C \cap Z(S)$. Then $t$ is connected to every $s\in \mathcal C \cap S$ distinct from $t$; it follows that $N_G(S)$ stabilizes $\Lambda (t)$, as does $C_G(s)$. Write $M=N_G(\Lambda(t))$.

Suppose that $\GG(\mathcal{C})$ is disconnected. Then $M<G$ by Lemma~\ref{lem:connectedstabilizer}. On the other hand, we have seen that $\langle C_G(s),N_G(S)\mid s \in \mathcal C \cap S\rangle\leq M$, so by~\cite[Proposition 17.11]{GLS2}, $M$ is strongly $p$-embedded in $G$. If $G$ has a strongly $p$-embedded subgroup then Lemma~\ref{lem:prophelp} implies $\GG(\mathcal C)$ is disconnected.

The data of Table~\ref{tbl:spe} comes from \cite[Theorem 7.6.1]{GLS3}.
\end{proof}

\begin{table}[H]
   \centering
     \begin{tabular}{|lc|lc|lc|}
\hline
$G $                           & $ p $    & $ G $                          & $ p $    & $ G $                  & $ p $ \\\hline
$\mathrm{L}_2(r^{n+1})$ & $r$ & $\mathrm U_3(r^n)$, $r^n\ne 2$ & $r$ & $\twist\B_2(2^{2n+1})$& $2$ \\
$\twist\G_2(3^{2n+1})$           & $3$    & $\Alt(2r)$, $r\ne 2$                     & $r$ &  & \\ \hline
$\mathrm L_3(4)$              & $3$ &$\M_{11} $                     & $ 3$     & $ \twist\F_4(2)' $             & $ 5 $ \\     $ \McL $               & $5 $  &
$ \Fi_{22} $                   & $5 $     & $ \J_4$                        & $11$                               \\ \hline
\end{tabular}
\caption{Simple groups possessing non-cyclic Sylow $p$-subgroups and a strongly $p$-embedded subgroup. The parameter $n$ is a positive integer and $r$ is a prime.}
\label{tbl:spe}
\end{table}

Proposition~\ref{prop:spe} has the following immediate corollary.

\begin{corollary}\label{propconsequences} Suppose that $G$ has a unique conjugacy class $\mathcal{C}$ of elements of order $p$. If $G$ does not have a strongly $p$-embedded subgroup then $\GG(\mathcal{C})$ is connected.
\end{corollary}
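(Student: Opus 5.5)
This is a direct application of Proposition~\ref{prop:spe}. Let $\mathcal C$ be the unique conjugacy class of elements of order $p$ in $G$. Then $\mathcal C$ is exactly the set of all elements of $G$ of order $p$, so Proposition~\ref{prop:spe} applies to $\GG(\mathcal C)$ without modification.

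That proposition says $\GG(\mathcal C)$ is disconnected if and only if $G$ has a strongly $p$-embedded subgroup. We are assuming $G$ has no such subgroup, so by the contrapositive of the forward implication, $\GG(\mathcal C)$ is connected.

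The only point to check is that the hypothesis of Proposition~\ref{prop:spe} is really satisfied. A single class $t^G$ equals the full set of elements of order $p$ precisely when there is only one class of such elements, which is our assumption. In particular, the rational closure of $\mathcal C$ is $\mathcal C$ itself, since every non-identity power of an element of order $p$ again has order $p$.

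There is no substantive obstacle. All the group-theoretic content, namely the use of \cite[Proposition 17.11]{GLS2} to show that a proper component stabilizer is strongly $p$-embedded, has already been carried out in the proof of Proposition~\ref{prop:spe}.
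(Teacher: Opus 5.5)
Your proposal is correct and is exactly the paper's argument: the paper states the result as an immediate corollary of Proposition~\ref{prop:spe}, which applies because the unique class $\mathcal C$ is the full set of elements of order $p$, so the absence of a strongly $p$-embedded subgroup forces $\GG(\mathcal C)$ to be connected. Your remark about the rational closure is harmless but not needed.
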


In our calculations, we are often in the situation where the normalizer $N_G(\langle t\rangle)$ is a maximal subgroup of $G$. In this situation we get the following result.

\begin{lemma}\label{lem:maxcent}
Suppose that $t\in G$ is conjugate to all of its non-trivial powers. If $N_G(\langle t\rangle)$ is maximal and $|\Lambda(t)|>|\langle t\rangle |-1$ then $\GG(t^G)$ is connected.
\end{lemma}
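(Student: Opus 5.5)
Write $\mathcal C=t^G$, $\Lambda=\Lambda(t)$ and $H=N_G(\Lambda)$. The strategy is to show $N_G(\langle t\rangle)\le H$, then use maximality together with the size hypothesis to force $H=G$. Lemma~\ref{lem:connectedstabilizer} then gives connectivity, since $\mathcal C$ is a single conjugacy class and $\Lambda$ meets it.

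\emph{Step 1: $N_G(\langle t\rangle)\le H$.} Since $t$ is conjugate to all of its non-trivial powers, $\langle t\rangle\setminus\{1\}\subseteq\mathcal C$. If $t$ has order at least $3$, then for any $s\in\langle t\rangle\setminus\{1,t\}$ the elements $s$ and $t$ commute and $st^{-1}\in\langle t\rangle\setminus\{1\}\subseteq\mathcal C$. So $s$ is adjacent to $t$, exactly as in Lemma~\ref{lem:rat1}, and hence $\langle t\rangle\setminus\{1\}\subseteq\Lambda$. If $t$ is an involution this containment is trivial. Now let $g\in N_G(\langle t\rangle)$. Then $t^g\in\langle t\rangle\setminus\{1\}\subseteq\Lambda$, so $\Lambda^g=\Lambda(t^g)=\Lambda$ and $g\in H$.

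\emph{Step 2: $H$ is strictly larger than $N_G(\langle t\rangle)$.} By Lemma~\ref{lem:componentgroup}, $\Lambda=t^H$. If $H=N_G(\langle t\rangle)$, then $t^H\subseteq\langle t\rangle\setminus\{1\}$, so $|\Lambda|\le|\langle t\rangle|-1$, contrary to hypothesis. Hence $N_G(\langle t\rangle)<H$.

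\emph{Step 3: conclusion.} Since $N_G(\langle t\rangle)$ is maximal in $G$ and properly contained in $H$, we get $H=G$. Lemma~\ref{lem:connectedstabilizer} then shows that $\GG(t^G)$ is connected.

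There is no real obstacle here. The only point needing care is Step~1, where the hypothesis that $t$ is conjugate to its powers is what puts all of $\langle t\rangle\setminus\{1\}$ into $\mathcal C$, and hence into $\Lambda$. This both makes $N_G(\langle t\rangle)$ stabilize $\Lambda$ and makes the bound $|\langle t\rangle|-1$ the right threshold.
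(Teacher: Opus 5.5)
Your proof is correct and follows essentially the same route as the paper: show that $N_G(\langle t\rangle)$ is properly contained in $N_G(\Lambda(t))$, then use maximality and Lemma~\ref{lem:connectedstabilizer}. The only difference is how you get proper containment: you use the orbit description $\Lambda(t)=t^{H}$ from Lemma~\ref{lem:componentgroup}, whereas the paper takes a neighbour $u\notin\langle t\rangle$ of $t$ and applies Lemma~\ref{lem1}; your version uses the hypothesis $|\Lambda(t)|>|\langle t\rangle|-1$ directly, without the paper's unstated step of producing such a neighbour.
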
 
\begin{proof}
Suppose that $u$ is adjacent to $t$ in $\GG(t^G)$ but that $u\not\in\langle t\rangle$. Then there is some $g\in G\setminus N_G(\langle t\rangle)$ such that $t^g=u$, and so since $t$ is conjugate to all of its non-trivial powers we deduce that $\langle N_G(\langle t\rangle),g\rangle\leq N_G(\Lambda(t))$ by Lemma~\ref{lem1} (here $\Lambda(t)$ is the component of $t$ in $\GG(t^G)$). But $N_G(\langle t\rangle)$ is maximal, whence $N_G(\Lambda(t))=G$. Therefore, $\GG(t^G)$ is connected by Lemma~\ref{lem:connectedstabilizer}.
\end{proof}

The next lemma is a mild generalization of this fact.

\begin{lemma}\label{lem:centralizermaximal} Let $t\in G$. Suppose that $t\in H\cap K$ for some subgroups $H$ and $K$ of $G$ with $G=\gen{H,K}$. If  $\GG(t^H)$ and $\GG(t^K)$ are connected then $\GG(t^G)$ is connected.
\end{lemma}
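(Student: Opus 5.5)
The plan is to compare components across the three graphs and then use Lemma~\ref{lem:connectedstabilizer} in $G$. First I will check that $\GG(t^H)$ and $\GG(t^K)$ are subgraphs of $\GG(t^G)$. Their vertex sets satisfy $t^H\subseteq t^G$ and $t^K\subseteq t^G$. For the edges, let $x,y\in t^H$ be distinct and adjacent in $\GG(t^H)$. Then $x$ and $y$ commute, and one of $xy^{-1}$, $x^{-1}y$ lies in $t^H\subseteq t^G$. Hence $x$ and $y$ are also adjacent in $\GG(t^G)$. The same argument applies to $K$.

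Next, let $\Lambda=\Lambda(t)$ denote the component of $t$ in $\GG(t^G)$. Since $\GG(t^H)$ is connected and is a subgraph of $\GG(t^G)$ containing $t$, its whole vertex set lies in $\Lambda$. So $t^H\subseteq\Lambda$, and likewise $t^K\subseteq\Lambda$. In particular, for every $h\in H$ we have $t^h\in\Lambda(t)$, which means $h\in N_G(\Lambda(t))$ by the definition of the stabilizer. Thus $H\le N_G(\Lambda)$, and similarly $K\le N_G(\Lambda)$.

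Since $N_G(\Lambda)$ is a subgroup, it contains $\gen{H,K}=G$. Lemma~\ref{lem:connectedstabilizer}, applied with $m=1$ and $\mathcal C=t^G$ (noting $t\in\Lambda$), then gives that $\GG(t^G)$ is connected. Equivalently, $\Lambda=t^{N_G(\Lambda)}=t^G$ by Lemma~\ref{lem:componentgroup}.

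There is no real obstacle here. The only point that needs care is the subgraph inclusion: adjacency in $\GG(t^H)$ requires the product to lie in $t^H$, and this is the stronger condition, so it implies adjacency in $\GG(t^G)$.
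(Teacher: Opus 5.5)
Your proof is correct and follows the paper's argument. Both $\GG(t^H)$ and $\GG(t^K)$ are subgraphs of $\GG(t^G)$ containing $t$, so $H$ and $K$ stabilize $\Lambda(t)$, hence so does $G=\gen{H,K}$, and Lemma~\ref{lem:connectedstabilizer} finishes; you just spell out the subgraph inclusion and the stabilizer membership that the paper leaves implicit.
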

\begin{proof} Since $\GG(t^H)$ and $\GG(t^K)$ are subgraphs of $\GG(t^G)$, we see that the stabilizer of $\Lambda(t)$ contains $H$ and $K$. Thus $\GG(t^G)$ is connected by Lemma \ref{lem:connectedstabilizer}.

\end{proof}


For part of our work, for $G$ a quasisimple group and $t \in G\setminus Z(G)$, we would like to relate the graphs $\GG(t^G)$ and $\GG(\tilde t^{\widetilde G})$ where $\widetilde G=G/Z$ and $\tilde{t}=Zt$ for $Z \le Z(G)$.

\begin{lemma}\label{lem:pnotcenter} Let $p$ be a prime and let $Z$ be a central $p'$-subgroup of $G$. There is a bijection $\Phi$ between elements of order $p$ in $G$ and elements of order $p$ in $G/Z$, given by $g\mapsto Zg$. This bijection and its inverse preserve conjugacy, and if $\mathcal C$ is a union of conjugacy classes of elements of order $p$ in $G$ and $\widetilde{\mathcal C}$ is its image in $G/Z$, then $\Phi$ induces a graph isomorphism $\GG(\mathcal C)\to \GG(\widetilde{\mathcal C})$.
\end{lemma}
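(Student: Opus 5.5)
The plan is to prove the statement in three stages: first that $\Phi$ is a bijection on elements of order $p$, then that it preserves conjugacy in both directions, and finally that it carries edges of $\GG(\mathcal C)$ exactly onto edges of $\GG(\widetilde{\mathcal C})$.

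\textbf{Bijection.} Write $|Z|=m$ with $\gcd(m,p)=1$. If $g$ has order $p$, then $Zg$ has order dividing $p$. It is not $1$, since $g\notin Z$ (the element $g$ has order $p$ and $Z$ is a $p'$-group). So $Zg$ has order $p$.

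For surjectivity, take $Zx$ of order $p$. Then $x^p\in Z$, so the order of $x$ is $p\cdot d$ with $d\mid m$, and $d$ is coprime to $p$. Write $x=x_px_{p'}$ as commuting $p$- and $p'$-parts. Then $x_{p'}$ has order dividing $d$, and $x_{p'}$ is a power of $x$, so $x_{p'}^p$ is a power of $x^p\in Z$, giving $x_{p'}^p\in Z$. As $\gcd(p,|x_{p'}|)=1$, $x_{p'}$ is a power of $x_{p'}^p$, hence $x_{p'}\in Z$. Thus $Zx=Zx_p$, where $x_p$ has order $p$.

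For injectivity, suppose $g,h$ have order $p$ and $Zg=Zh$. Then $h=gz$ with $z\in Z$ central. We get $1=h^p=g^pz^p=z^p$, and since $\gcd(p,m)=1$ this forces $z=1$.

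\textbf{Conjugacy.} Conjugation commutes with $\Phi$, because $\Phi(g^x)=(Zg)^{Zx}$. So $\Phi$ maps conjugate elements to conjugate elements. Conversely, suppose $(Zg)^{Zx}=Zh$. Then $\Phi(g^x)=\Phi(h)$, and injectivity gives $g^x=h$. Hence $\Phi$ maps each class $\mathcal C$ bijectively onto $\widetilde{\mathcal C}$, and the same holds for unions of classes.

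\textbf{Edges.} Take $u,v\in\mathcal C$.

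First suppose $[u,v]=1$. Then $[Zu,Zv]=1$. Moreover $uv^{-1}$ has order $1$ or $p$, and since $u\neq v$ it has order $p$. Therefore $uv^{-1}\in\mathcal C$ if and only if $\Phi(uv^{-1})=Zu(Zv)^{-1}\in\widetilde{\mathcal C}$, and the same argument applies to $u^{-1}v$.

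The main point, which is the only genuinely non-trivial step, is the converse: $Zu$ commuting with $Zv$ must force $u$ commuting with $v$. If $[Zu,Zv]=1$, then $v^u=vz$ for some $z\in Z$. Taking $p$th powers gives $1=(v^u)^p=v^pz^p=z^p$, so $z=1$. Hence $u$ and $v$ commute, and the previous paragraph applies. Thus $\Phi$ preserves both adjacency and non-adjacency, so it is a graph isomorphism $\GG(\mathcal C)\to\GG(\widetilde{\mathcal C})$.
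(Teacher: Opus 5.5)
Your proof is correct and follows essentially the same route as the paper. Injectivity, the converse direction of conjugacy, and lifting commutation from $G/Z$ to $G$ all rest on one observation: if $z\in Z$ is central and $g$ and $gz$ both have order $p$, then $z^p=1$, which forces $z=1$ because $Z$ is a $p'$-group. The differences are minor: you prove surjectivity via the $p$-part decomposition, which the paper simply calls clear, and you obtain the converse of conjugacy preservation from injectivity rather than repeating the $z=1$ argument.
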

\begin{proof} That $\Phi$ is surjective is clear. To see injectivity, note that if $Zg=Zh$ then there is some $z\in Z$ such that $zg=h$, so $zg$ has order a power of $p$ and thus $z=1$. If $g$ and $h$ are conjugate in $G$ it is clear that $Zg$ and $Zh$ are conjugate in $\widetilde{G}=G/Z$. On the other hand if $Zg$ and $Zh$ are conjugate in $\widetilde{G}$ then $g^r=zh$ for some $r\in G$ and $z\in Z$, and again we deduce that $z$ must be trivial, whence $\Phi$ preserves conjugacy.

Let $\bar{\Phi}:\GG(\mathcal{C})\to\GG(\widetilde{\mathcal{C}})$ be the map induced by $\Phi$; we need to show that it preserves adjacency. Let $g,h\in \mathcal{C}$. Clearly, $[g,h]=1$ implies that $[Zg,Zh]=1$. Moreover, if $[Zg,Zh]=1$, then $[g,h] \in Z$. Hence $g^h=gz$ for some $z\in Z$ and, as $g^h$ has order $p$, we have $z=1$. Thus $[g,h]=1$. Since $\Phi$ preserves conjugacy, $gh^{-1}\in\mathcal{C}$ if and only if $ZgZh^{-1}\in\widetilde{\mathcal{C}}$, hence the result.
\end{proof}

Let $\Psi:G\mapsto G/Z(G)$ be the quotient map so that $\Phi$ is the restriction of $\Psi$ to the set of elements of order $p$ in $G$. If $G$ is a quasisimple group and $t\in G$ an element of order $p$ then Lemma~\ref{lem:componentgroup} shows that the component group of $t$ 
is generated by $t^{N_G(\Lambda(t))}=t^{\Psi^{-1}(N_{\widetilde{G}}(\Lambda(\tilde{t})))}$. Thus the component group for $G$ is just $O^{p'}(\Psi^{-1}(\Delta(\tilde{t})))$. This allows us to fairly easily compute the component group in any central extension $G$ of a simple group $G/Z(G)$ for primes not dividing $|Z(G)|$.

Thus we only need to understand the graphs for primes dividing $|Z(G)|$. For most types of quasisimple groups this means $p=2$ and sometimes $p=3$, with $p\geq 5$ only possible for special linear and special unitary groups.

The structure of the graphs at $p=2$ when $2\mid |Z(G)|$ can often be understood from the corresponding graph in the simple quotient. Given graphs $\Pi$ and $\Sigma$, the \emph{lexicographic product} $\Pi\cdot\Sigma$ is the graph with vertex set $V(\Pi)\times V(\Sigma)$ with $(u,x)$ adjacent to $(v,y)$ if and only if either $u$ is adjacent to $v$ in $\Pi$, or $u=v$ and $x$ and $y$ are adjacent in $\Sigma$.

\begin{lemma}\label{lem:coversingleclass} Suppose that $G$ is a group and $Z \le Z(G)$ is a $2$-group. Set $\widetilde{G}=G/Z$ and let $\varphi$ be the quotient map from $G$ to $\widetilde{G}$. Suppose that $\widetilde{\mathcal{C}}$ is a conjugacy class of non-central involutions in $\widetilde G$ such that $\mathcal{C}:=\varphi^{-1}(\widetilde{\mathcal{C}})$ is  a conjugacy class of involutions in $G$. Then $\GG(\mathcal{C})$ is the lexicographic product $\GG(\widetilde{\mathcal{C}})\cdot \overline{K_{|Z|}}$, where $\overline{K_{|Z|}}$ is the edgeless graph on $|Z|$ vertices. In particular, if $\GG(\widetilde{\mathcal{C}})$ is connected then $\GG(\mathcal{C})$ is connected.
\end{lemma}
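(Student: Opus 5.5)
We identify the vertex set of $\GG(\mathcal C)$ with $V(\GG(\widetilde{\mathcal C}))\times Z$ and then compare adjacency in the two graphs directly.

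\emph{Step 1: the identification.} Fix a set-theoretic section $\sigma\colon \widetilde{\mathcal C}\to \mathcal C$. Since $\mathcal C=\varphi^{-1}(\widetilde{\mathcal C})$, every $x\in\mathcal C$ can be written uniquely as $x=\sigma(\varphi(x))z$ with $z\in Z$. The map $x\mapsto(\varphi(x),z)$ is therefore a bijection $\mathcal C\to \widetilde{\mathcal C}\times Z$. Each fibre $\varphi^{-1}(\tilde u)=uZ$ has exactly $|Z|$ elements, and all of them are involutions by hypothesis.

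\emph{Step 2: distinct elements of one fibre are non-adjacent.} Take $x\ne y$ with $\varphi(x)=\varphi(y)=\tilde u$. Then $x$ and $y$ commute, since $y=xz$ with $z$ central. Their product is $xy=x^2z=z\in Z$. Elements of $Z$ map to the identity, which does not lie in the non-central class $\widetilde{\mathcal C}$, so $z\notin\mathcal C$. Because both vertices are involutions, adjacency reduces to the single condition $xy\in\mathcal C$, so $x$ and $y$ are not adjacent. This gives the edgeless factor $\overline{K_{|Z|}}$.

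\emph{Step 3: elements of different fibres are adjacent exactly when their images are.} Take $x,y\in\mathcal C$ with $\tilde x=\varphi(x)\neq\tilde y=\varphi(y)$.

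If $x,y$ are adjacent, then $[x,y]=1$ and $xy\in\mathcal C$. Applying $\varphi$ gives $[\tilde x,\tilde y]=1$ and $\tilde x\tilde y\in\widetilde{\mathcal C}$, so $\tilde x,\tilde y$ are adjacent.

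For the converse, suppose $\tilde x,\tilde y$ are adjacent. Then $\tilde x\tilde y\in\widetilde{\mathcal C}$, and since $\mathcal C$ is the full preimage, $xy\in\mathcal C$. The remaining issue is that commuting of $\tilde x,\tilde y$ only gives $[x,y]\in Z$, so we must show $[x,y]=1$. Here we use that $xy\in\mathcal C$ is an involution: $xyxy=1$ gives $xy=y^{-1}x^{-1}=yx$, so $[x,y]=1$. Hence $x,y$ are adjacent.

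\emph{Step 4: conclusion.} Steps 2 and 3 show that $(\tilde x,z)$ and $(\tilde y,z')$ are adjacent if and only if $\tilde x\sim\tilde y$ in $\GG(\widetilde{\mathcal C})$. This is precisely adjacency in $\GG(\widetilde{\mathcal C})\cdot\overline{K_{|Z|}}$, since the second factor has no edges. For connectivity, suppose $\GG(\widetilde{\mathcal C})$ is connected. Because $\widetilde{\mathcal C}$ is a non-central class of involutions, it contains at least two elements, so every vertex $\tilde u$ has a neighbour $\tilde v$. Two vertices $(\tilde u,z),(\tilde u,z')$ in the same fibre are then joined through $(\tilde v,1)$. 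Vertices in different fibres are joined by lifting a path in $\GG(\widetilde{\mathcal C})$. Hence $\GG(\mathcal C)$ is connected.

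The only delicate point is Step 3: commuting in $\widetilde G$ does not by itself imply commuting in $G$, and the argument has to use that the product $xy$ is an involution. This is where the hypothesis that $\mathcal C$ consists of involutions is essential.
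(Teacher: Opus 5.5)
Your proof is correct and follows the same route as the paper's. Both show that lifts $u,v$ are adjacent exactly when their images are, using two facts: $\mathcal C$ is the full preimage, so $uv\in\mathcal C$ iff $\varphi(uv)\in\widetilde{\mathcal C}$; and an involutory product of two involutions forces them to commute. You are somewhat more careful than the paper in two places. You treat same-fibre pairs explicitly. You also note that non-centrality gives $|\widetilde{\mathcal C}|\ge 2$, which the connectivity deduction genuinely needs.
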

\begin{proof} Let $x,y\in\widetilde{\mathcal{C}}$. To prove the result it suffices to show that whenever $u\in\varphi^{-1}(x)$ and $v\in\varphi^{-1}(y)$ then $u$ and $v$ are adjacent in $\GG(\mathcal{C})$ if and only if $x$ and $y$ are adjacent in $\GG(\widetilde{\mathcal{C}})$. This is straightforward, as $\varphi(uv)=xy$, and so $uv\in\varphi^{-1}(\widetilde{\mathcal{C}})=\mathcal{C}$ if and only if $xy\in\widetilde{\mathcal{C}}$. In addition since $uv\in\mathcal{C}$ and $xy\in \widetilde{\mathcal{C}}$, both are involutions and so $u$ and $v$ commute and $x$ and $y$ commute, hence the result.
\end{proof}

\medskip

\begin{remark} Lemma~\ref{lem:coversingleclass} does not extend to odd primes precisely because in the proof $x$ and $y$ might commute even though $u$ and $v$ do not.
\end{remark}

Given a group $G$ with conjugacy classes $\mathcal{C}_1,\mathcal{C}_2,\mathcal{C}_3$, the \emph{class multiplication coefficient} $n_G(\mathcal{C}_1,\mathcal{C}_2,\mathcal{C}_3)$ is the number of ways a fixed element $t\in \mathcal{C}_3$ can be written as a product $xy$ where $x\in\mathcal{C}_1$ and $y\in \mathcal{C}_2$. By \cite[Theorem 4.2.12]{Gorenstein}, 
\[n_G({\mathcal{C}_1,\mathcal{C}_2,\mathcal{C}_3})
=\frac{|\mathcal{C}_1||\mathcal{C}_2|}{|G|}\sum_{\chi\in\Irr(G)}\frac{\chi(g_1)\chi(g_2)\overline{\chi(g_3)}}{\chi(1)},\]
where $g_1$, $g_2$, and $g_3$ are representatives of the conjugacy classes $\mathcal{C}_1$, $\mathcal{C}_2$, and $\mathcal{C}_3$, respectively. In particular, when the character table of $G$ is known (as is the case for the sporadic groups, for example), the quantities $n_G(\mathcal{C}_1,\mathcal{C}_2,\mathcal{C}_3)$ can easily be computed.

The following lemma gives a very easy way to test the existence of edges in the graphs on involution classes. When $\mathcal{C}_1=\mathcal{C}_2=\mathcal{C}_3$, we write $n_G(\mathcal{C}_1):=n_G(\mathcal{C}_1,\mathcal{C}_1,\mathcal{C}_1)$.
\begin{lemma}\label{lem:edgeexist}
Let $G$ be a group and $\mathcal{C}$ be a conjugacy class of involutions in $G$. Then $\GG(\mathcal{C})$ has an edge if and only if $n_G(\mathcal{C})\ne 0$. In particular, if, for $t \in \mathcal C$, $C_G(t)$ is a maximal subgroup of $G$ and $n_G(\mathcal{C})\ne 0$, then $\GG(\mathcal{C})$ is connected.
\end{lemma}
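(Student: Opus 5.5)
For the first assertion I would translate edges of $\GG(\mathcal{C})$ directly into solutions of $xy=t$ with $x,y,t \in \mathcal{C}$. Since $\mathcal{C}$ consists of involutions, $y^{-1}=y$. So distinct $x,y\in\mathcal{C}$ are adjacent exactly when $[x,y]=1$ and $xy\in\mathcal{C}$.

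Suppose $\{x,y\}$ is an edge and set $t=xy\in\mathcal{C}$. Conjugating by an element $g$ with $t^g$ equal to the fixed representative of $\mathcal{C}$ produces a factorization of that representative as a product of two elements of $\mathcal{C}$. Hence $n_G(\mathcal{C})\ge 1$.

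Conversely, suppose $n_G(\mathcal{C})\neq 0$, so some $t\in\mathcal{C}$ can be written $t=xy$ with $x,y\in\mathcal{C}$.
\begin{itemize}
\item[$\bullet$] $x\neq y$, since otherwise $t=x^2=1$.
\item[$\bullet$] $x$ and $y$ commute. Indeed $xy=t$ is an involution, so $xyxy=1$, giving $xy=y^{-1}x^{-1}=yx$.
\end{itemize}
So $\{x,y\}$ is an edge of $\GG(\mathcal{C})$, and the equivalence is proved.

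For the second assertion, assume $C_G(t)$ is maximal and $n_G(\mathcal{C})\ne 0$. By the first part there is an edge $\{x,y\}$, and conjugating we may assume one endpoint is $t$; say $t$ is adjacent to $u\in\mathcal{C}$ with $u\neq t$.

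Choose $g\in G$ with $t^g=u$. Then $g\notin C_G(t)$: if $g$ centralized $t$ we would have $u=t^g=t$.

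By Lemma~\ref{lem1}, applied with $\Gamma=\GG(\mathcal{C})$, the stabilizer $N_G(\Lambda(t))$ contains $\gen{C_G(t),g}$. This subgroup properly contains the maximal subgroup $C_G(t)$, so it equals $G$. Therefore $N_G(\Lambda(t))=G$, and Lemma~\ref{lem:connectedstabilizer}, applied with the single class $\mathcal{C}$, gives that $\GG(\mathcal{C})$ is connected.

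The only point needing care is noting that adjacency involves no choice between $xy^{-1}$ and $x^{-1}y$ for involutions, and that a product of two involutions being an involution forces them to commute. The rest follows directly from earlier lemmas.
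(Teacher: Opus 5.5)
Your proof is correct and matches the paper's. The first part is the same argument: for involutions, adjacency reduces to $xy\in\mathcal{C}$, and a product of two involutions that is itself an involution forces the two to commute. For the connectivity claim, the paper just cites Lemma~\ref{lem:maxcent}, whose proof is exactly the argument you write out, since $N_G(\gen{t})=C_G(t)$ for an involution and an edge at $t$ gives $|\Lambda(t)|>1$; only the easy containment in Lemma~\ref{lem1} is used, which also follows directly from $\Lambda(t)^g=\Lambda(u)=\Lambda(t)$.
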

\begin{proof}
If $n_G(\mathcal{C})>0$ then there exist $x,y\in\mathcal{C}$ such that $xy\in\mathcal{C}$. Moreover, since $\mathcal{C}$ is a class of involutions, $xy\in\mathcal{C}$ implies that $x$ and $y$ commute. Thus, the adjacency conditions for $\GG(\mathcal{C})$ are satisfied by $x$ and $y$. The converse is clear. The connectivity statement follows from Lemma~\ref{lem:maxcent}.
\end{proof}
\begin{lemma}\label{lem:cliques}
Let $G$ be a group and $\mathcal{C}=t^G$ be a conjugacy class of elements of order $p$ with $\mathcal{C}=\overline{\mathcal{C}}$. If $n_G(\mathcal{C})=p-2$ then $\Lambda(t)$ is the complete graph $K_{p-1}$.
\end{lemma}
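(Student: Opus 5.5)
Fix $t\in\mathcal C$; by definition $n_G(\mathcal C)$ counts the pairs $(x,y)\in\mathcal C\times\mathcal C$ with $xy=t$. The plan is to exhibit $p-2$ such pairs inside $\gen{t}$, which by hypothesis are then all the pairs, and to use this to pin down the neighbourhood of $t$ in $\GG(\mathcal C)$. Since $\mathcal C=\overline{\mathcal C}$, every non-identity power of $t$ lies in $\mathcal C$. For each $k\in\{2,\dots,p-1\}$ write $t=t^{k}\cdot t^{1-k}$; here $t^k\ne 1$ and $t^{1-k}\ne 1$ because $k\not\equiv 0,1\pmod p$, so both factors lie in $\mathcal C$. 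These are $p-2$ distinct pairs, so they exhaust the factorizations of $t$ as a product of two elements of $\mathcal C$. (For $p=2$ the hypothesis reads $n_G(\mathcal C)=0$, and the argument below shows $\Lambda(t)=\{t\}=K_1$.)

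Next I would identify the neighbours of $t$. Suppose $u\in\mathcal C$ is adjacent to $t$, so $u\ne t$, $[u,t]=1$, and one of $tu^{-1}$, $t^{-1}u$ lies in $\mathcal C$. In the first case $t=(tu^{-1})\cdot u$ is a factorization with both factors in $\mathcal C$. In the second case $(t^{-1}u)^{-1}=u^{-1}t\in\mathcal C$ because $\mathcal C$ is closed under inverses, and $u^{-1}\in\mathcal C$ as well, so $t=u\cdot(u^{-1}t)$ is again such a factorization. In either case $u$ appears as a factor of one of the pairs found above, so $u\in\gen{t}\setminus\{1\}$. Conversely, Lemma~\ref{lem:rat1} applies because $\mathcal C=\overline{\mathcal C}$, and shows that every element of $\gen{t}\setminus\{1,t\}$ is adjacent to $t$. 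Hence the closed neighbourhood of $t$ is exactly $\gen{t}\setminus\{1\}$.

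To finish, apply the same argument to each $s\in\gen{t}\setminus\{1\}$. Since $s\in\mathcal C=t^G$ and $n_G(\mathcal C)$ is a class constant, the closed neighbourhood of $s$ is $\gen{s}\setminus\{1\}=\gen{t}\setminus\{1\}$. This set therefore has no edges leaving it, so it is a connected component. All of its $p-1$ elements are pairwise adjacent, so $\Lambda(t)=K_{p-1}$. The only delicate point is the second adjacency case, where one must use closure of $\mathcal C$ under inverses to convert $t^{-1}u\in\mathcal C$ into a factorization of $t$ itself.
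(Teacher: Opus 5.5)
Your proof is correct and follows essentially the paper's argument: the $p-2$ factorizations of $t$ inside $\gen{t}$ account for all of $n_G(\mathcal C)$, each neighbour $u$ of $t$ yields a factorization of $t$ with $u$ as a factor (forcing $u\in\gen{t}$), and Lemma~\ref{lem:rat1} gives the clique on $\gen{t}\setminus\{1\}$. You also make explicit two points the paper leaves implicit: the use of closure under inverses in the second adjacency case, and applying the argument to every power of $t$ (via conjugacy) to see that the clique is a whole component.
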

\begin{proof} By Lemma~\ref{lem:rat1}, the subgraph of $\GG(t^G)$ induced on the powers of $t$ is $K_{p-1}$. Each neighbour $u$ of $t$ corresponds to the pair $(u,u^{-1}t)$ with product $t$. Since $t$ has $p-2$ neighbours which are powers of $t$, it follows that if $t$ is adjacent to vertices in $G\setminus \gen{t}$ then $n_G(\mathcal{C})>p-2$.
\end{proof}
\begin{lemma}\label{lem:covermulticlass}

Suppose that $G$ is a group and $Z \le Z(G)$ is a $2$-group. Set $\widetilde{G}=G/Z$ and let $\varphi$ be the quotient map from $G$ to $\widetilde{G}$. Suppose that $\widetilde{\mathcal{C}}$ is a conjugacy class of non-central involutions in $\widetilde G$ such that $ \varphi^{-1}(\widetilde{\mathcal{C}})= \bigsqcup_{j=1}^m\mathcal{C}_j $ is a union of $m>1$ conjugacy classes of involutions in $G$. Assume that $\GG(\widetilde{\mathcal{C}})$ is connected. If $n_G(\mathcal{C}_1,\mathcal{C}_1,\mathcal{C}_j)=0$ for all $1< j\leq m$, then $\GG(\mathcal{C}_1)$ is connected and $\GG(\mathcal{C}_j)$ is edgeless for all $1<j\leq m$.
\end{lemma}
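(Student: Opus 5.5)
The plan is to compare $\GG(\mathcal C_j)$ with $\GG(\widetilde{\mathcal C})$ through $\varphi$, as in Lemma~\ref{lem:coversingleclass}, and then use the vanishing of the class multiplication coefficients to locate the products. Throughout, set $\mathcal D=\varphi^{-1}(\widetilde{\mathcal C})=\bigsqcup_j\mathcal C_j$.

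\emph{Step 1 (where products of adjacent vertices lie).} Take $u,v\in\mathcal C_i$ adjacent in $\GG(\mathcal C_i)$. Then $uv\in\mathcal C_i\subseteq\mathcal D$, so $\varphi(u)\varphi(v)\in\widetilde{\mathcal C}$. Since $\varphi(u)\varphi(v)$ is an involution, $\varphi(u)\ne\varphi(v)$. Conversely, let $u\in\mathcal C_i$ and $v\in\mathcal D$ with $\varphi(u),\varphi(v)$ adjacent in $\GG(\widetilde{\mathcal C})$. Then $uv\in\mathcal D$, so $uv$ is an involution, hence $u,v$ commute and $uv\in\mathcal C_k$ for some $k$. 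Thus every edge of $\GG(\widetilde{\mathcal C})$ lifts, but the lift need not respect the decomposition into the $\mathcal C_j$.

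\emph{Step 2 ($\GG(\mathcal C_j)$ is edgeless for $j>1$).} Suppose $u,v\in\mathcal C_j$ are adjacent, with $w=uv\in\mathcal C_j$. Then $u=wv$ and $v$ are commuting involutions, so $\{u,v,w\}$ is a triple in which the product of any two equals the third. Hence $n_G(\mathcal C_j)\ne0$. I need to convert this into the vanishing of $n_G(\mathcal C_1,\mathcal C_1,\mathcal C_j)$, and this is the main obstacle. My first attempt is as follows.

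Pick $x\in\mathcal C_1$ with $\varphi(x)=\varphi(u)$, so $x=uz$ for some $z\in Z$. The key idea is that the $\mathcal C_j$ are permuted by multiplication by $Z$: if $x\in\mathcal C_1$ then $xz$ is an involution in $\varphi^{-1}(\widetilde{\mathcal C})$, and the map $x\mapsto xz$ commutes with conjugation. So $Z$ acts on $\{\mathcal C_1,\dots,\mathcal C_m\}$, and this action is transitive because all fibres lie over a single class. Choose $z,z'\in Z$ with $uz\in\mathcal C_1$ and $vz'\in\mathcal C_1$. Their product is $wzz'$, and Step~1 gives $wzz'\in\mathcal D$. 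If $wzz'\in\mathcal C_j$ for some $j>1$, this contradicts the hypothesis. So the argument must handle $wzz'\in\mathcal C_1$. I would deal with this case by exploiting freedom in the choice of $z,z'$: the stabiliser of $\mathcal C_1$ in $Z$ is the same for each class, so we may vary $z'$ over a coset and sweep $wzz'$ through all $\mathcal C_k$. This would force some product in $\mathcal C_1\mathcal C_1$ to lie in $\mathcal C_j$ with $j>1$, again contradicting the hypothesis. Verifying the details of this coset argument is the crux of the proof.

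\emph{Step 3 ($\GG(\mathcal C_1)$ is connected).} Given an edge $\varphi(u)\sim\varphi(v)$ of $\GG(\widetilde{\mathcal C})$ with $u\in\mathcal C_1$, I would use the same $Z$-sweep to choose the lift $v\in\mathcal C_1$. The hypothesis then forces $uv\in\mathcal C_1$, since the product lies in $\mathcal D$ but in no $\mathcal C_j$ with $j>1$. Hence every edge of $\GG(\widetilde{\mathcal C})$ lifts to an edge of $\GG(\mathcal C_1)$ at any prescribed lift of one endpoint. Consequently the component of $t\in\mathcal C_1$ meets every fibre of $\varphi|_{\mathcal C_1}$, so its stabiliser $H$ maps onto $\widetilde G$, and $G=HZ$. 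Since $Z$ is central it acts trivially by conjugation, so $t^G=t^H$, and $\GG(\mathcal C_1)$ is connected by Lemma~\ref{lem:componentgroup}.
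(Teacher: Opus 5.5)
There is a genuine gap in Step~2, which is the heart of the edgeless claim. You leave the ``coset sweep'' unverified, and as described it cannot work. $Z$ is abelian and acts transitively on $\{\mathcal C_1,\dots,\mathcal C_m\}$, so the stabiliser $Z_0$ of $\mathcal C_1$ fixes every $\mathcal C_k$. The admissible $z'$ (those with $vz'\in\mathcal C_1$) form exactly one coset of $Z_0$. Varying $z'$ within that coset therefore never changes the class of $wzz'$, so nothing is swept through the other classes.

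The idea you are missing is that $u$ and $v$ lie in the \emph{same} class $\mathcal C_j$. If $v=u^g$, then $vz=(uz)^g$ because $z$ is central, so $vz\in\mathcal C_1$ and you may take $z'=z$. Moreover $z^2=(uz)^2=1$, so $(uz)(vz)=uvz^2=w\in\mathcal C_j$. This directly contradicts $n_G(\mathcal C_1,\mathcal C_1,\mathcal C_j)=0$, and it is precisely the paper's argument. In particular, the case $wzz'\in\mathcal C_1$ that you were worried about never occurs: every admissible $z'$ lies in $zZ_0$, so $zz'\in Z_0$ and $wzz'\in\mathcal C_j$.

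Step~3 is essentially the paper's path-lifting argument, but your final deduction is not justified as written. A component meeting every fibre of $\varphi|_{\mathcal C_1}$ only yields $\varphi(H)\,C_{\widetilde G}(\varphi(t))=\widetilde G$, not $\varphi(H)=\widetilde G$. A disconnected double cover of $\GG(\widetilde{\mathcal C})$ has components meeting every fibre while their stabilisers have index $2$.

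You do not need that inference. Your own argument shows that \emph{any} two lifts in $\mathcal C_1$ of adjacent vertices of $\GG(\widetilde{\mathcal C})$ are adjacent: their product lies in $\varphi^{-1}(\widetilde{\mathcal C})$ but in no $\mathcal C_j$ with $j>1$. Hence paths lift with both endpoints prescribed. Two elements of $\mathcal C_1$ in the same fibre have a common neighbour over any neighbour of their image, which exists since $\widetilde{\mathcal C}$ is a non-central class and $\GG(\widetilde{\mathcal C})$ is connected. So $\Lambda(t)=\mathcal C_1$ directly.

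No $Z$-sweep is needed to choose the lift $v$ either, since $\varphi(\mathcal C_1)=\widetilde{\mathcal C}$.
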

\begin{proof} Suppose that $\{Zx,Zy\}$ is an edge in $\GG(\widetilde{\mathcal C})$. Let $u\in \mathcal C_1 \cap Zx$ and $v \in \mathcal C_1 \cap Zy$. Then $Z xy\in \widetilde {\mathcal{C}}$, hence $uv \in \bigsqcup_{j=1}^m\mathcal{C}_j$. As $n_G(\mathcal{C}_1,\mathcal{C}_1,\mathcal{C}_j)=0$ for all $1<j\le m$, we have $uv \in \mathcal C_1$. In particular, $uv$ is an involution and consequently $u$ and $v$ commute. Hence $u$ and $v$ are adjacent in $\GG(\mathcal{C}_1)$. It follows that paths in $\GG(\widetilde{\mathcal{C}})$ lift to paths in $\GG(\mathcal{C}_1)$, and so
$\GG(\mathcal {C}_1)$ is connected.


For the second claim suppose for contradiction that $j>1$ and that there exist $w,r,s\in \mathcal{C}_j$ satisfying $rs=w$. Take $z\in Z$ such that $rz\in\mathcal{C}_1$. Then $1=(rz)^2=r^2z^2=z^2$ and so $z$ is an involution. As $r$ and $s$ are conjugate and $z$ is central, we have $sz\in \mathcal C_1$. Now  $(rz)(sz)=rsz^2=rs=w$, contradicting the assumption that $n_G(\mathcal{C}_1,\mathcal{C}_1,\mathcal{C}_j)=0$, hence the result.
\end{proof}




\section{Sporadic simple groups}\label{s:simple}
Throughout the remaining sections we shall frequently rely on data from the $\mathbb{ATLAS}$~\cite{atlas}. Information regarding characters, maximal subgroups, and normalizers in quasisimple groups, whenever asserted without citation, may be found in the $\mathbb{ATLAS}$. We also use the $\mathbb{ATLAS}$ names for conjugacy classes of simple groups, and for covers and subgroups of the simple groups we shall use names agreeing with the {\sf GAP} Character Table Library~\cite{CTblLib1.3.11}. Our notation for abstract groups will also be based on the $\mathbb{ATLAS}$: $N.H$ denotes an extension of a normal subgroup $N$ by a group $H$, and when important to indicate whether or not the extension splits, we shall use $N\cn H$ and $N\ns H$ for split and non-split extensions, respectively. For a prime $p$ the elementary abelian $p$-group of order $p^a$ is denoted by $p^a$, the notation $p^{1+2a}_+$ when $p$ is odd denotes an extraspecial $p$-group of order $p^{1+2a}$ and exponent $p$ while $2^{1+4}_-$ is the central product of $Q_8$ and $D_8$, and $2^{1+6}_+$ is the central product of three copies of $D_8$. Finally, $[p^a]$ is a $p$-group of order $p^a$ whose structure is not made explicit.

The computer program \texttt{FindStabilizerOfComponent} in the supplementary materials~\cite{codesup} can be used very quickly on `small' sporadic quasisimple groups, specifically those with a permutation representation on fewer than roughly 500 000 points. The sporadic groups $\HN$ and $\Fi_{24}'$ take longer, but are still possible to complete with the general program. We include a function \texttt{TestSporadicGroup} that takes as an argument a string like \texttt{m11} and checks that the graphs for that group have the properties stated in this article. We also include a function \texttt{TestMatrixSporadicGroup}, which checks the specific claims made about the groups $\Ly$, $\Th$, and $\J_4$.

The general structure of the graphs $\GG(t^G)$, component groups $\Delta(t)$, and component stabilizers $N_G(\Lambda(t))$ for sporadic simple groups is given in the following three results. We may assume that $p^2\mid |G|$, as otherwise the graph is disconnected and the component group is cyclic of order $p$. Throughout the rest of the paper, whenever $\Lambda(t)\subseteq\langle t\rangle$ we shall omit an explicit description of the stabilizer $N_G(\Lambda(t))$ as this group is simply either $C_G(t)$ or $N_G(\langle t\rangle)$, and the description of such subgroups is readily found in other sources.

\begin{proposition}\label{prop:explicit1} Let $G$ be a sporadic simple group and $t$ be an element of prime order $p$. Suppose that a Sylow $p$-subgroup does not have order $p$. Then $\Lambda(t)\subseteq \langle t\rangle$ if and only if either $G=\HN$, $t^G\in\{\mathrm{5C,5D}\}$, and $\Lambda(t)=\{t\}$, or else $|\Lambda(t)|=p-1$ and $(G,t^G)$ appear in Table~\ref{tbl:prop3.1}. In the former case, $\GG(\overline{t^G})$ is connected, and in all other cases $\Lambda(t)=\langle t\rangle\setminus\{1\}$.
\end{proposition}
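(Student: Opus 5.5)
This is a finite check over the pairs $(G,t^G)$ with $G$ sporadic and $t$ of prime order $p$, where $p^2$ divides $|G|$. I will organise it as follows.

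\textbf{Reduction to counting neighbours.} By Lemma~\ref{lem:componentgroup} and Lemma~\ref{lem1}, $\Lambda(t)\subseteq\langle t\rangle$ holds exactly when every neighbour of $t$ in $\GG(t^G)$ lies in $\langle t\rangle$. A neighbour $u$ of $t$ gives a factorisation $t=u\cdot(u^{-1}t)$. If $t^{-1}\in t^G$, then $u^{-1}t\in t^G$; if not, $u^{-1}t$ lies in the class of $t^{-1}$. So the relevant quantity is the class multiplication coefficient $n_G(t^G,\mathcal D,t^G)$, where $\mathcal D\in\{t^G,(t^{-1})^G\}$. This is computable from the character table via the formula quoted before Lemma~\ref{lem:edgeexist}.

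The powers $t^i$ with $t^i\in t^G$ are automatically neighbours of $t$, as in the proof of Lemma~\ref{lem:rat1}. They account for exactly the fusion-determined number of solutions lying inside $\langle t\rangle$. Hence $\Lambda(t)\subseteq\langle t\rangle$ if and only if $n_G$ equals this inner count, exactly as in Lemma~\ref{lem:cliques}. In that case $\Lambda(t)$ is precisely the set of non-trivial powers of $t$ conjugate to $t$.

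\textbf{Case analysis.} I run this computation over all sporadic groups and classes of prime-order elements. When $n_G$ exceeds the inner count, some neighbour lies outside $\langle t\rangle$. When it equals the inner count, I record the pair.

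For the recorded pairs I check rationality of the class, that is, whether all powers of $t$ lie in $t^G$, using the power maps in the character table.
\begin{itemize}
\item For $p=2$ the question of $|\Lambda(t)|=1=p-1$ is automatic.
\item For odd $p$, rational classes give $|\Lambda(t)|=p-1$ and $\Lambda(t)=\langle t\rangle\setminus\{1\}$; these pairs form Table~\ref{tbl:prop3.1}.
\item If the class is not rational, then $\Lambda(t)$ is a proper subset of $\langle t\rangle\setminus\{1\}$. The check should show that this happens only for $\HN$ with classes 5C and 5D, which are the classes whose squares fall into the other class of the pair. There $\Lambda(t)=\{t\}$.
\end{itemize}

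\textbf{The $\HN$ classes.} For $\HN$, 5C/5D, I must show that $\GG(\overline{t^G})$ is connected, with $\overline{t^G}=\mathrm{5C}\cup\mathrm{5D}$. By Lemma~\ref{lem:rat1} the component of $t$ contains $\langle t\rangle\setminus\{1\}$, and so meets both classes. I then exhibit a neighbour $u\notin\langle t\rangle$, with $u$ commuting with $t$ and $ut^{-1}$ lying in 5C $\cup$ 5D; this exists because some coefficient $n_{\HN}(\mathrm{5C},\mathrm{5C}\text{ or }\mathrm{5D},\mathrm{5C}\text{ or }\mathrm{5D})$ exceeds its power-count. This shows the stabiliser of the component properly contains $N_G(\langle t\rangle)$.

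Since $N_{\HN}(\langle t\rangle)$ should be contained in a unique maximal subgroup, a $5^{1+4}.2^{1+4}.5.4$, I then need a second neighbour, or a subgroup $K$ containing $t$ in which the graph is connected, that takes the stabiliser outside it. With that, Lemma~\ref{lem:centralizermaximal} and Lemma~\ref{lem:connectedstabilizer} give $N_G(\Lambda)=G$ and hence connectivity. Alternatively I would certify this with \texttt{FindStabilizerOfComponent} on a permutation representation of $\HN$.

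\textbf{Main obstacle.} The bulk of the work is mechanical but large: evaluating the structure constants for the Monster, Baby Monster and similar groups. The genuinely delicate step is the $\HN$ connectivity argument, where the maximal-subgroup bookkeeping must be done carefully.
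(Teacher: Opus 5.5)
Your proposal has a genuine gap for odd $p$. The class multiplication coefficient $n_G(t^G,\mathcal D,t^G)$ counts \emph{all} factorisations $t=xy$ with $x\in t^G$ and $y\in\mathcal D$. A factorisation only gives a neighbour of $t$ when $x$ commutes with $t$.

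For involutions commutation is automatic: if $x$, $y$ and $xy$ are involutions then $[x,y]=1$. That is why the paper's remark uses the character-table route only for involution classes. For odd $p$ commutation is not forced. The map from neighbours to factorisations is therefore only an injection, and Lemma~\ref{lem:cliques} is one-directional. If $n_G$ equals the inner count, this certifies $\Lambda(t)\subseteq\langle t\rangle$; if $n_G$ exceeds it, nothing follows.

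This already fails in $\Alt(5)$ with $t=(1\,2\,3)$. There $t=(1\,2\,4)\cdot\bigl((1\,2\,4)^{-1}(1\,2\,3)\bigr)$ with both factors $3$-cycles, so $n(\mathrm{3A})>1=p-2$. Yet $\Lambda(t)=\{t,t^2\}$ by Lemma~\ref{lem:notcyclic}.

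Consequently your procedure cannot prove the `only if' direction of the proposition. That direction says every odd-prime class outside Table~\ref{tbl:prop3.1} and $\HN$ 5C/5D has a neighbour outside $\langle t\rangle$. Your procedure may also discard genuine members of Table~\ref{tbl:prop3.1} whose coefficient is inflated by non-commuting factorisations. The same error undermines your claimed neighbour $u\notin\langle t\rangle$ in the $\HN$ 5C/5D argument. The rest of that argument, escaping the maximal subgroup, is not actually supplied.

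What you need is a count of commuting factorisations. Since $t$ is central in $C_G(t)$, multiplication by $t$ permutes $C_G(t)$-classes. So the neighbours of $t$ form the union of those $C_G(t)$-classes $X\subseteq t^G$ for which $X^{-1}t$ fuses into $t^G\cup(t^{-1})^G$. This is computable from the character table of $C_G(t)$ together with its fusion into $G$. It is exactly how the paper handles $\Ly$ 3A, via the order-$3$ classes of $3\ns\McL$.

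For the converse direction the paper exhibits neighbours outside $\langle t\rangle$ directly, in two ways:
\begin{itemize}
\item through $t^G$-pure subgroups of order at least $p^2$, such as the $5^3$ in $5^3.\mathrm{SL}_3(5)$ for $\Ly$ 5A, or in $5^3.\mathrm{L}_3(5)$ for $\BM$ 5B;
\item through subgroups such as $\Th$ or $2\ns\BM$ in which the corresponding graph is already known.
\end{itemize}
In all other cases, including $\HN$ 5C/5D, the paper relies on machine verification with \texttt{FindStabilizerOfComponent}.

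A smaller inaccuracy: Lemma~\ref{lem:rat1} concerns $\GG(\overline{\mathcal C})$, not $\GG(t^G)$. In $\GG(t^G)$ a power $t^i\in t^G$ is adjacent to $t$ only if $t^{i-1}$ or $t^{1-i}$ lies in $t^G$. So your statement that such powers are `automatically neighbours' is false for non-rational classes.
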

\begin{table}[H]
\centering
    \begin{tabular}{|Ll|Ll|Ll|Ll|}\hline
        G & $t^G$ &G & $t^G$ &G & $t^G$ &G & $t^G$ \\\hline
         \HS & 5A& \Suz & 3A & \Fi_{22} &2A & \Ly &3A\\
         \J_2&3A&\Co_3&5A&\Fi_{23}&2A&\J_4&11A\\
         \McL&3A, 5A&\Co_2&2A, 3A, 5A&\Co_1&3A&\BM&2A\\\hline
    \end{tabular}
    \caption{Sporadic simple groups and classes $t^G$ satisfying the conditions of Proposition~\ref{prop:explicit1} with $\Lambda(t)=\langle t\rangle\setminus\{1\}$.}
\label{tbl:prop3.1}
\end{table}
\begin{proposition}\label{prop:explicit2} Let $G$ be a sporadic simple group and $t$ be an element of prime order $p$. Suppose that $\Delta(t)\cong C_p\times C_p$. Then the pairs $(G,t^G)$ appear in Table~\ref{tbl:prop3.2}. Moreover, for the first column, $|\Lambda(t)|=p(p-1)$, and for the second column $|\Lambda(t)|=p^2-1$. For $\J_2$ and $\He$ the classes are not rational, and there are 6 and 21 elements in $\Lambda(t)$ respectively, with twice as many in the component in $\GG(\overline{t^G})$.
\end{proposition}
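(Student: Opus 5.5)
The plan is to reduce the statement to a finite, class-by-class verification over all sporadic simple groups and all classes of elements of prime order $p$ with $p^2 \mid |G|$, and then to read off the structure of $\Lambda(t)$ when $\Delta(t)$ is elementary abelian of order $p^2$. The key observation is that if $\Delta(t)=E\cong C_p\times C_p$, then $\Lambda(t)=t^H\subseteq E$ with $H=N_G(\Lambda(t))$, by Lemma~\ref{lem:componentgroup}. So $H$ normalizes $E$, and $C_G(t)\le H\le N_G(E)$. Consequently $|\Lambda(t)|=|H:C_G(t)|$ divides $|N_G(E):C_G(t)\cap N_G(E)|$ and is at most $p^2-1$. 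Moreover $\Lambda(t)$ is a union of $C_G(t)$-orbits on $t^G\cap E$ containing $t$.

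First I would screen candidate pairs $(G,t^G)$. By Lemma~\ref{lem:cliques} and the class multiplication coefficients $n_G(t^G)$, computed from the character tables, one detects whether $t$ has neighbours outside $\langle t\rangle$. If not, $(G,t^G)$ falls under Proposition~\ref{prop:explicit1} rather than here. For the remaining classes, the program \texttt{FindStabilizerOfComponent} computes $N_G(\Lambda(t))$ via Lemma~\ref{lem1} (or Lemma~\ref{lem1variant} for large groups). It does this by taking $C_G(t)$-orbit representatives $u_i$ of neighbours of $t$, choosing $g_i$ with $t^{g_i}=u_i$, and forming $\langle C_G(t),g_1,\dots,g_r\rangle$. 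Whenever this group is maximal or all of $G$, Lemma~\ref{lem:connectedstabilizer} gives connectivity, so such pairs are excluded. The pairs for which the stabilizer is a proper subgroup normalizing an elementary abelian $p^2$ generated by $t^H$ are collected into Table~\ref{tbl:prop3.2}. For the large groups ($\Ly$, $\Th$, $\J_4$, $\BM$, $\MM$) I would instead argue locally. One identifies $C_G(t)$ from the $\mathbb{ATLAS}$, determines which elements of $C_G(t)$ conjugate to $t$ commute with $t$ and have product (or quotient) in $t^G$, and checks whether these all lie in a single $p^2$ subgroup $E$ normalized by $C_G(t)$.

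Next I would determine the sizes. In each case $E\setminus\{1\}$ splits under $N_G(E)$ into classes, and $\Lambda(t)=t^H$. There are two patterns. In the first column of the table, $t^G\cap E$ consists of all elements of $E$ outside one distinguished subgroup $\langle z\rangle$, where $z$ lies in another class. Then $\Lambda(t)=E\setminus\langle z\rangle$ has size $p^2-p=p(p-1)$. In the second column, every non-identity element of $E$ lies in $t^G$, and connectivity inside $E$ is immediate: any two distinct non-identity elements either generate $E$, in which case one routes through a third element whose quotient is in $t^G$, or lie in the same cyclic subgroup, where Lemma~\ref{lem:rat1} applies. This gives $p^2-1$. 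For the non-rational cases $\J_2$ and $\He$, $t^G\cap E$ is only half of the relevant set: $6$ of the $12$ elements of order $5$ in $E$ for $\J_2$, and $21$ of $42$ for $\He$ at $p=7$. In $\GG(\overline{t^G})$ the two algebraically conjugate classes merge by Lemma~\ref{lem:rat1}, doubling the component.

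The main obstacle is the large groups, where neither the brute-force orbit computation nor a full permutation representation is practical. There the hardest step is showing that no neighbour of $t$ lies outside $E$. I would attack this with the structure constants $n_G(\mathcal C,\mathcal C,\mathcal C)$ and $n_G(\mathcal C,\mathcal C^{-1},\mathcal C)$. Specifically, I would compare the total number of solutions $t=xy$ with $x,y\in t^G$ commuting against the count of such pairs already inside $E$. I would rely on matrix computations (\texttt{TestMatrixSporadicGroup}) only where a power map ambiguity prevents the character-theoretic count from separating commuting from non-commuting solutions.
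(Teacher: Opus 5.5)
Your overall plan matches the paper's: machine verification for the smaller groups through Lemma~\ref{lem1}, and separate local arguments for $\Ly$, $\Th$, $\J_4$, $\BM$, $\MM$. However, two steps fail as written.

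First, you discard every pair for which $\langle C_G(t),g_1,\dots,g_r\rangle$ is ``maximal or all of $G$'', citing Lemma~\ref{lem:connectedstabilizer}. That lemma needs $N_G(\Lambda(t))=G$; a maximal stabilizer means the graph is disconnected. Taken literally, your filter deletes genuine entries of Table~\ref{tbl:prop3.2}. The stabilizers $3^2.[2^4]$, $5^2.[2^3].3.2$, $[3^5].[2^3]$ and $5^2.3.[2^2]$ for $(\M_{11},\mathrm{3A})$, $(\twist\F_4(2)',\mathrm{5A})$, $(\J_3,\mathrm{3B})$ and $(\J_2,\mathrm{5C})$ are the maximal subgroups $\M_9{:}2$, $5^2{:}4\Alt(4)$, $3^2.3^{1+2}{:}8$ and $5^2{:}D_{12}$. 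The principle you want is Lemma~\ref{lem:maxcent}: if $N_G(\langle t\rangle)$ is maximal and $t$ has a neighbour outside $\langle t\rangle$, then the stabilizer properly contains a maximal subgroup and so equals $G$.

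Second, for the large groups you locate the difficulty in the wrong place, and your proposed tool cannot settle it.

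\emph{The only large-group entry is easy.} It is $(\J_4,\mathrm{11B})$, where $|C_G(t)|=121$ gives $C_G(t)=E\cong 11^2$. Likewise $C_G(u)=E$ for each $11$B-element $u\in E$, so $\Lambda(t)\subseteq E$ immediately. The paper phrases this via the TI Sylow subgroup $11^{1+2}_+$.

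\emph{Structure constants do not separate commuting solutions.} $n_G(\mathcal C,\mathcal C,\mathcal C)$ counts \emph{all} factorizations $t=xy$, commuting or not. So it certifies the absence of outside neighbours only in the degenerate case of Lemma~\ref{lem:cliques}. This is not a power-map issue: commuting solutions must be counted inside $C_G(t)$. The paper does exactly this for $\Ly$ 3A, using the fusion of the order-$3$ classes of $3\ns\McL$.

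\emph{The real work is missing.} What the proof needs, and your sketch does not supply, is that every \emph{other} class of these five groups has $\Delta(t)\not\cong C_p\times C_p$. Your test in $C_G(t)$ would do this in principle. For $\BM$ and $\MM$, though, it requires fusion data for centralizers such as $2^{1+24}.\Co_1$ or $3^{1+12}.2\Suz$. The paper instead uses:
\begin{enumerate}
\item[(i)] Lemma~\ref{lem:maxcent}, after finding $t^G$-pure subgroups in maximal subgroups: the $5^3$ of $5^3.\mathrm{SL}_3(5)$ and $5^3.\mathrm{L}_3(5)$, the cores of $7^2.\mathrm{SL}_2(7)$ and $13^2.\mathrm{SL}_2(13).4$, and pure $3^2$'s in $(3\times\G_2(3)).3$;
\item[(ii)] Lemma~\ref{lem:centralizermaximal}, with $\Th<\BM$ and $2\ns\BM<\MM$;
\item[(iii)] Corollary~\ref{propconsequences} and Table~\ref{tbl:maxcent};
\item[(iv)] explicit computations for $\Ly$ 3B, 5B and $\J_4$ 2B.
\end{enumerate}

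\emph{Minor points.} For $\J_2$, $E$ has $24$ elements of order $5$, twelve of them in $5\mathrm{C}\cup 5\mathrm{D}$. Also, the doubling in $\GG(\overline{t^G})$ needs $E$ to be the normal Sylow $5$-subgroup of $C_G(t)$, because the rational graph has more edges.
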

\begin{table}[H]
\begin{center}
\begin{tabular}{|LlL|LlL|LlL|}\hline
G    & $t^G$ & N_G(\Lambda(t))                          & G            & $t^G$ & N_G(\Lambda(t))  & G    & $t^G$  & N_G(\Lambda(t)) \\\hline
\HS  & 5C    & [5^3].[2^2]                           & \M_{11}      & 3A    & 3^2.[2^4]     & \J_2 & 5C, 5D & 5^2.3.[2^2]  \\
\McL & 5B    & [5^3].[2^2]                           & \twist\F_4(2)' & 5A    & 5^2.[2^3].3.2 & \He  & 7D, 7E & [7^3].3.2     \\
\ON  & 7B    & [7^3].3.2                             & \J_3         & 3B    & [3^5].[2^3]   &      &        &              \\
\J_4 & 11B   &  11^{1+2}_+.10 &              &       &               &      &        &           \\ \hline
\end{tabular}
\end{center}
\caption{Sporadic simple groups and classes $t^G$ satisfying the conditions of Proposition~\ref{prop:explicit2}.}
\label{tbl:prop3.2}
\end{table}
\begin{proposition}\label{prop:explicit3} Let $G$ be a sporadic simple group and let $t$ be an element of prime order $p$. Suppose that $|\Delta(t)|>p^2$ but that $\GG(t^G)$ is disconnected. Then $(G,t^G)$ are as in Table~\ref{tbl:prop3.3}.
\end{proposition}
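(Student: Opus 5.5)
This is a classification over the finitely many pairs $(G,t^G)$ with $G$ sporadic and $t$ of prime order $p$ with $p^2\mid |G|$. For each pair I aim to compute $H=N_G(\Lambda(t))$ and decide whether $H=G$. If $H=G$, Lemma~\ref{lem:connectedstabilizer} gives connectivity. Otherwise $\Delta(t)=\langle t^H\rangle$ by Lemma~\ref{lem:componentgroup}. I then discard the pairs already covered by Propositions~\ref{prop:explicit1} and~\ref{prop:explicit2}, where $\Delta(t)\le C_p\times C_p$. The pairs that remain, disconnected with $|\Delta(t)|>p^2$, make up Table~\ref{tbl:prop3.3}.

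The main tool is Lemma~\ref{lem1}: $H=\langle C_G(t),g_1,\dots,g_r\rangle$, where the $g_i$ conjugate $t$ to representatives of the $C_G(t)$-orbits of neighbours of $t$.

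The first step is a cheap theoretical filter.
\begin{itemize}
\item When $C_G(t)$ or $N_G(\langle t\rangle)$ is maximal and $t$ has a neighbour outside $\langle t\rangle$, Lemmas~\ref{lem:maxcent} and~\ref{lem:edgeexist} give connectivity at once. For involutions, the existence of such a neighbour can be detected from the character table via $n_G(\mathcal C)$.
\item When $G$ has a unique class of elements of order $p$ and no strongly $p$-embedded subgroup, Corollary~\ref{propconsequences} gives connectivity.
\item Lemma~\ref{lem:centralizermaximal} handles many more classes. Pick two subgroups $H_1,H_2$ of $G$ containing $t$ with $\langle H_1,H_2\rangle=G$, for example two maximal subgroups whose classes fuse appropriately, such that $\GG(t^{H_i})$ is connected in each. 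Then $\GG(t^G)$ is connected.
\end{itemize}
These arguments should dispose of the bulk of the classes for the large groups, leaving only a short list of genuinely disconnected cases.

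For the small sporadic groups (those with permutation representations of degree up to about $500\,000$, plus $\HN$ and $\Fi_{24}'$ with more time), I would simply run \texttt{FindStabilizerOfComponent}. It enumerates the $C_G(t)$-orbits on neighbours, builds $H$ by Lemma~\ref{lem1}, and reports $H$ and $\langle t^H\rangle$. This identifies $\Delta(t)$ as one of $\J_2$, $\HS$, $\mathrm{O}_8^+(2)$ or $5^{1+4}_+$ together with $H=N_G(\Delta(t))$, and these entries fill the table. For $\Ly$, $\Th$ and $\J_4$ I would use matrix representations. There I would apply Lemma~\ref{lem1variant}, which only needs orbits of a Sylow $p$-subgroup $S$ of $C_G(t)$ on neighbours lying in $S$. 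This makes the computation feasible.

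The main obstacle is the largest groups, $\Co_1$, $\BM$ and $\MM$, together with the classes of $\Fi_{23}$, $\J_4$ and $\Th$ where the filter fails. Direct computation is impossible here, so the plan is structural:
\begin{itemize}
\item Locate neighbours of $t$ inside a known $p$-local subgroup, e.g.\ $N_G(\langle t\rangle)$ or a normalizer of an elementary abelian $p^2$, using class fusion from the character table library.
\item Show that these neighbours give elements $g$ with $\langle C_G(t),g\rangle$ not contained in any maximal subgroup. This is proved by comparing against the list of maximal subgroups containing $C_G(t)$.
\item For the disconnected survivors, check that the candidate subgroup $N_G(\Delta)$ contains all neighbours of $t$, so that $H$ is exactly that subgroup. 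I would verify this by computing class multiplication coefficients $n_G(\mathcal C,\mathcal C,\mathcal C)$ and comparing them with the corresponding counts in $N_G(\Delta)$.
\end{itemize}
The delicate point is this final containment: certifying that no neighbour escapes the proposed stabilizer.
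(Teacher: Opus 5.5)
Your overall plan matches the paper's proof.
\begin{itemize}
\item Everything except $\Ly$, $\Th$, $\J_4$, $\BM$, $\MM$ is settled by running \texttt{FindStabilizerOfComponent}.
\item $\Ly$, $\Th$, $\J_4$ are done with matrix computations.
\item $\BM$ and $\MM$ are handled structurally: maximal normalizers (Lemma~\ref{lem:maxcent}), $n_G(\mathcal C)$ for involution classes, Corollary~\ref{propconsequences}, pure elementary abelian subgroups inside $p$-local maximal subgroups, and edges exhibited inside subgroups such as $\Th<\BM$ and $2\ns\BM<\MM$ (with Lemma~\ref{lem:centralizermaximal} for $\BM$, class 2D).
\end{itemize}
$\Co_1$ and $\Fi_{23}$ do not belong on your list of hard cases. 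Their permutation representations have degrees $98280$ and $31671$, and the paper runs the program on them. For the implication as stated, you only need connectivity or $|\Delta(t)|\le p^2$ for each pair outside Table~\ref{tbl:prop3.3}, and your plan delivers that.

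The step that would fail is the one you single out as delicate: certifying the stabilizer of a disconnected survivor by comparing $n_G(\mathcal C,\mathcal C,\mathcal C)$ with counts in $N_G(\Delta)$.
\begin{itemize}
\item Every entry of Table~\ref{tbl:prop3.3} has $p=5$. For odd $p$ the structure constant counts all factorizations $t=xy$ with $x,y\in\mathcal C$, not only commuting ones.
\item So it is only an upper bound for the neighbours of $t$, which is how the paper uses it in Lemma~\ref{lem:cliques}.
\item In the cases at hand it is far larger than any count a proper subgroup could account for, so the comparison certifies nothing.
\item Separately, knowing that a subgroup $K$ contains all neighbours of $t$ does not by itself give $N_G(\Lambda(t))\le K$.
\end{itemize}
The correct criterion is the one the paper uses for $\Ly$, class 5B, with $K=5^{1+4}_+.4.\Sym(6)$. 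Verify that $C_G(t)\le K$; then every neighbour, lying in $C_G(t)$, lies in $K$. Verify also that every neighbour is $K$-conjugate to $t$. Then the elements $g_i$ of Lemma~\ref{lem1} can be chosen in $K$, so $N_G(\Lambda(t))\le K$, and equality is checked afterwards.

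In your plan this flaw is not fatal. All table entries arise either from groups you compute directly ($\Suz$, $\HN$, $\Co_3$, $\Co_2$, $\Fi_{22}$) or from the $\Ly$ matrix computation, where Lemma~\ref{lem1variant} produces the stabilizer outright. The largest groups turn out to have no survivors with $|\Delta(t)|>p^2$.
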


\begin{table}[h]
\begin{center}
\begin{tabular}{|lllll|lllll|}\hline
$G$     & $t^G$ & $|\Lambda(t)|$ & $\Delta(t)$    & $N_G(\Lambda(t))$ & $G$      & $t^G$ & $|\Lambda(t)|$ & $\Delta(t)$            & $N_G(\Lambda(t))$                      \\\hline
$\Suz$  & 5B    & 4032        & $\J_2$      & $\J_2.2$       & $\Co_2$  & 5B    & 147840      & $\HS$               & $\HS.2$                             \\
$\HN$   & 5E    & 400         & $5^{1+4}_+$ &  $5^{1+4}_+.2^{1+4}_-.5.2$            & $\Fi_{22}$ & 5A    & 1741824     & $\mathrm{O}_8^+(2)$ & $\mathrm{O}_8^+(2).\mathrm{Sym}(3)$ \\
$\Co_3$ & 5B    & 147840      & $\HS$       & $\HS$          & $\Ly$     & 5B    & 2400        & $5^{1+4}_+$         & $5^{1+4}_+.4.\Sym(6)$              \\\hline
\end{tabular}%
\end{center}
\caption{Sporadic simple groups and classes $t^G$ satisfying the conditions of Proposition~\ref{prop:explicit3}.}
\label{tbl:prop3.3}
\end{table}

We now prove each of these propositions, which will collectively prove Theorem~\ref{thm:mainresults}(a). Given a conjugacy class $\mathcal{C}$ of $G$, we say that a subgroup $H<G$ is $\mathcal{C}$-\emph{pure} if $\mathcal{C}\cap H=H\setminus\{1\}$.

\begin{proof} We use the program \texttt{TestSporadicGroup} in the supplementary materials, which proves the propositions above for all simple groups apart from $\Ly$, $\Th$, $\J_4$, $\BM$, and $\MM$. We must therefore deal with each of those groups in turn. The claims about $\Ly$, $\Th$, and $\J_4$ that we make here are verified using the {\sc Magma}~\cite{magma} program \texttt{TestMatrixSporadicGroup} in the supplementary materials. We remark that Proposition~\ref{prop:explicit1} can be proved directly for the involution classes from character tables by appealing to Lemma~\ref{lem:cliques}.

If $p=2$ and $C_G(t)$ is maximal, then either $\GG(t^G)$ is connected or it is edgeless by Lemma~\ref{lem:maxcent}. Moreover, we may assume that $G$ has at least two conjugacy classes of involutions by Proposition~\ref{prop:spe} and Corollary~\ref{propconsequences}.

Thus we can use class multiplication coefficients to determine the connectivity of $\GG(t^G)$. The remaining pairs $(G,t^G)$ for which $t$ is an involution with $C_G(t)$ maximal are displayed in Table~\ref{tbl:maxcent}, together with the value $n_G(t^G)$, as computed using the {\sf GAP} Character Table Library~\cite{CTblLib1.3.11}.
\begin{table}[ht]
    \begin{center}
    \begin{tabular}{|lll|lll|}\hline
    $G$ & $t^G$ & $n_G(t^G)$&$G$ & $t^G$ & $n_G(t^G)$\\
    \hline
    $\J_4$ & $\mathrm{2A}$ & 112266&$\MM$   & $\mathrm{2A}$ & 27143910000\\
     $\BM$   & $\mathrm{2A}$ & 0&&$\mathrm{2B}$ & 90717803016750\\
       & $\mathrm{2B}$ & 7379550&&&\\
                   & $\mathrm{2C}$ & 184246272&&& \\ \hline \end{tabular}\end{center}
\caption{Some involution classes and their class multiplication coefficients.}
\label{tbl:maxcent}
\end{table}
Consequently, we deduce from Lemma~\ref{lem:edgeexist} that $\GG(t^G)$ is connected for each such pair $(G,t^G)$ aside from $(\BM,\mathrm{2A})$ and in this case $\GG(t^G)$ is edgeless. The only involution classes remaining to consider are $\mathrm{2B}$ in $\J_4$ and $\mathrm{2D}$ in $\BM$.

\medskip
\textbf{Lyons group}: For $p=3$ there are two classes: 3A has centralizer $3\ns\McL$ and 3B has a centralizer of order $174960$. Suppose $t\in\mathrm{3A}$; the group $3\ns\McL$ has six classes of elements of order $3$, with representatives $t$, $t^2$, $a$, $at$, $at^2$, and $b$, where $t$ is central and $a$ and $b$ are two elements of order $3$. By a computer calculation, $t$, $t^2$, and $a$ lie in class 3A in $G$ and the others lie in class 3B. Moreover, the class of $a$ is real in $3\ns\McL$. Therefore we see that in $\Lambda(t)$ the only edge incident to $t$ is from $t$ to $t^2$, as claimed.

Now we suppose that $t$ belongs to class 3B. We work inside the maximal subgroup $L=3^{2+4}.2.\Alt(5).D_8$, which contains a 3B-element and its centralizer. Inside here we find that $L$ is contained in the stabilizer $N_G(\Lambda(t))$, and that $L$ does not contain the full centralizer of every element in $\Lambda(t)\cap L$. Thus $L<N_G(\Lambda(t))$, and consequently $N_G(\Lambda(t))=G$, so the claim follows from Lemma~\ref{lem:connectedstabilizer}. (See the supplementary materials for a proof of this.)

For $p=5$ there are two classes, with 5A having centralizer $5^{1+4}_+.\Sym(6)$ (and maximal normalizer) and 5B having centralizer of order $3750=2\cdot 3\cdot 5^4$. The maximal subgroup $5^3.\mathrm{SL}_3(5)$ contains a Sylow $5$-subgroup $S$ of $G$ and has a normal elementary abelian subgroup $Q$ of order $5^3$. Since $Q$ is normal in $S$ it contains an element of $Z(S)$ and hence of conjugacy class 5A. Moreover, $\mathrm{SL}_3(5)$ is transitive on non-zero vectors, and so $Q$ is a 5A-pure subgroup. Therefore $\GG(t^G)$ is connected for $t\in\mathrm{5A}$ by Lemma~\ref{lem:maxcent}.

For $t\in\mathrm{5B}$, we work entirely inside the maximal subgroup $H=5^{1+4}_+.4.\Sym(6)$. A computer calculation in the supplementary materials shows that $H$ contains the centralizer of a 5B element, so we can find all neighbours of $t$ in $\GG(t^G)$ inside $H$. They are all $H$-conjugate to $t$, so $N_G(\Lambda(t))$ lies inside $H$ as well by Lemma~\ref{lem1}. In fact, one can check that $H$ \emph{is} the stabilizer of $\Lambda(t)$, and that $\Delta(t)$ is the normal subgroup $5^{1+4}_+$ of $H$. This is performed in the supplementary materials.

\medskip

\textbf{Thompson group}: There is a single conjugacy class each of elements of orders $5$ and $7$, and three classes of elements of order $3$. When $p>3$, our results follow from Corollary~\ref{propconsequences} and Table~\ref{tbl:spe}. Thus we can focus on the case $p=3$.

Here we look in the subgroup $H=(3\times \mathrm G_2(3)).3$. As confirmed by computation performed in the supplementary materials, $H$ has 41 classes of elementary abelian subgroups of order $3^2$; we find such subgroups $P$, $Q$, and $R$, where at least six elements of $P$ lie in 3A, at least six elements of $Q$ lie in 3B, and at least six elements of $R$ lie in 3C. Thus $\GG(t^G)$ has an edge connecting $t$ with some element not in $\gen t$ for $t$ in any of the classes of elements of order $3$. Since $N_G(\gen t)$ is maximal in all three cases, we see that the stabilizer of $\Lambda(t)$ is $G$, and hence $\GG(t^G)$ is connected for each $t$ of order $3$ by Lemma~\ref{lem:connectedstabilizer}.

\medskip

\textbf{Janko group $\J_4$}: There is a single conjugacy class of elements of order $3$, and two classes each for orders $2$ and $11$. The normalizers of $\gen t$ for $t$ in classes 2A and 11A are maximal, but not for the other classes. For $t\in\mathrm{3A}$, we may apply Corollary~\ref{propconsequences} and Table~\ref{tbl:spe} to see that $\GG(t^G)$ is connected.

For class 2B we work inside the maximal subgroup $L=2^{11}\cn\M_{24}$ (importantly a split extension) and set $Q=O_2(L)$. Here we let $t$ be one of the 276 elements of $Q$ that are in conjugacy class 2B. There are 2B-pure Klein four subgroups in $Q$, so let $u$ be such that $\gen{t,u} \leq Q $ is 2B-pure. A computer calculation in the supplementary materials shows that $L=\gen{C_L(t),C_L(u)}$, and so $L\leq N_G(\Lambda(t))$. On the other hand, in the supplementary materials we find that there is a 2B-pure Klein four subgroup $\gen{t,v}$ of $L$ that is not in $Q$. Thus $C_G(v)\not\leq L$, and hence $\Lambda(t)$ is stabilized by $\gen{L,C_G(v)}=G$, and the claim follows from Lemma~\ref{lem:connectedstabilizer}.


Finally, consider $p=11$. The Sylow $11$-subgroups of $G$ have trivial intersection, and so the connected components are easy to determine. The centralizer of 11A has order 132 times that of 11B. As the Sylow $11$-subgroup is $P\cong 11^{1+2}_+$, we see that $Z(P)\setminus\{1\}$ has type 11A and $P\setminus Z(P)$ consists of 11B elements. In particular, both classes are rational. Hence $\Lambda(t)$ consists of the non-identity powers of $t$ for $t$ in 11A, and has order $11(11-1)$ for $t$ in 11B. In particular, when $t\in \mathrm{11B}$, $\Delta(t)$ has order $11^2$ and $N_G(\Lambda(t))=N_G(\Delta(t)) = N_G(\gen{t})P$ and so has order $2\cdot 5\cdot 11^3$. We obtain the result in Proposition~\ref{prop:explicit2}.
\medskip

\textbf{Baby Monster $\BM$}: Here the relevant classes are 2A, 2B, 2C, 2D, 3A, 3B, 5A, 5B, 7A. In all cases except for 2D and 7A the normalizer of $\gen t$ is maximal in $G$ (and the conjugacy class is a rational class), so all we have to do is find some element in $\Lambda(t)$ that lies outside $\gen t$, and then $\GG(t^G)$ is connected by Lemma~\ref{lem:maxcent}. Since 7A is the unique class of elements of order 7 and $\BM$ has no strongly 7-embedded subgroup (see Table~\ref{tbl:spe}), it follows from Corollary~\ref{propconsequences} that $\GG(t^G)$ is connected for $t$ in 7A.

When $p=2$, the classes $\mathrm{2A},\mathrm{2B}$, and $\mathrm{2C}$ are handled in Table~\ref{tbl:maxcent}.
It remains to consider $t\in\mathrm{2D}$. The involutions from the Thompson group $\Th$, a subgroup of $G$, lie in class 2D of $G$, as a simple character value calculation shows (the character of degree $4371$ restricts to $\Th$ as $248+4123$).

Thus, choosing $K_1$ to be a subgroup $\Th$ of $G$ containing $t$, we have that $\GG(t^{K_1})$ is connected. Therefore the maximal subgroup $K_1$ is a subgroup of $N_G(\Lambda(t))$, but so is $C_G(t)$, which is not contained in $K_1$. Thus $\GG(t^G)$ is connected for class 2D as well by Lemma~\ref{lem:centralizermaximal}.

For $p=3$, we again choose a subgroup $K_2=\Th$ containing $t$, as $\GG(t^{K_2})$ is connected for all elements of order $3$. The three classes 3A, 3B, and 3C in $\Th$ have traces $78$, $-3$, and $-3$ on $248+4123=4371$. By looking at the traces of 3A ($78$) and 3B ($-3$) in $\BM$, we see that 3A fuses into 3A and 3B fuses into 3B. Thus the connected component of $t$ in $G$ contains elements outside $\gen t$, since the same is true in $K_2$.

Finally, we let $p=5$. Note that class 5A has centralizer $5 \times \HS\cn2$, and has Sylow $5$-subgroups of order $5^4$, whereas 5B has centralizer $5^{1+4}_+.2^{1+4}_-.\Alt(5)$, and is central in a Sylow $5$-subgroup of $G$. There is a maximal subgroup $(5^2\cn 4.\Sym(4)) \times \Sym(5)$ in $G$. The $5^2$ here must be 5A-pure, because each element centralizes a $\Sym(5)$ whereas the 5B centralizer has no such subgroup. On the other hand, there is a maximal subgroup $5^3.\mathrm{L}_3(5)$ which contains a Sylow $5$-subgroup $S$. Since the normal $5^3$ subgroup contains $Z(S)$, and $\mathrm{L}_3(5)$ acts transitively on the $5$-elements in the $5^3$, the $5^3$ is 5B-pure. Thus $\GG(t^G)$ is connected for both classes of elements of order $5$.

\medskip

\textbf{Monster $\MM$}: Here the relevant classes are 2A, 2B, 3A, 3B, 3C, 5A, 5B, 7A, 7B, 11A, 13A, 13B. In all cases except for 11A the normalizer of $\gen t$ is maximal in $G$ and the relevant conjugacy class is a rational class, so we need that there is an element of $\Lambda(t)$ not in $\gen t$. The involution classes have been dealt with in Table~\ref{tbl:maxcent}. The Monster has a unique class of elements of order 11 but has no strongly 11-embedded subgroup. Therefore, $\GG(t^G)$ is connected for $t\in \mathrm{11A}$ by Corollary~\ref{propconsequences}.

The embedding of $2\ns\BM$ into $\MM$ restricts the $196883$ character of $\MM$ to a sum of characters $96256+96255+4371+1$ for $2\ns\BM$. Thus we may check that the labels for the classes 3A, 3B, 5A, 5B and 7A are all the same for $K=2\ns\BM$ and $G$. By the argument above for the group $\BM$, together with Lemma~\ref{lem:pnotcenter}, we deduce that $\GG(t^K)$ is connected for each of the relevant classes in $K$. Therefore, $\GG(t^G)$ is connected as well.

For the class 3C, the normalizer is $S_3\times\Th$, so it suffices to find an element of 3C not in the direct factor $S_3$, as then the component stabilizer will contain two distinct maximal subgroups $S_3\times\Th$ and thus the whole group. This is easy to verify by checking class fusion using the {\sf GAP} Character Table Library~\cite{CTblLib1.3.11}. The same proof works for 13A, where the normalizer is $(13\cn6 \times \mathrm{L}_3(3))\cn2$, and it suffices to find an element of 13A not in the direct factor $13\cn6$. We again check this using the GAP Character Table Library (classes 13D, 13E, 13F, and 13G from the maximal subgroup fuse to 13A of $\MM$).

We thus would like a 7B-pure subgroup of order $49$, and a 13B-pure subgroup of order $169$ to complete the proof. While it is possible to find edges in these graphs without just checking maximal subgroups, the easiest way is to note that the $7$-core of the maximal subgroup $7^2.\mathrm{SL}_2(7)$ and the $13$-core of $13^2.\mathrm{SL}_2(13).4$ have the correct requirements \cite[Sections 2.20 and 2.24]{dlpp2024un}. Thus the Monster only appears silently in Propositions~\ref{prop:explicit1}, \ref{prop:explicit2}, and \ref{prop:explicit3}.
\end{proof}

\medskip
Together, Lemma~\ref{lem:notcyclic}, Propositions~\ref{prop:explicit1}, \ref{prop:explicit2}, and \ref{prop:explicit3} prove Theorem~\ref{thm:mainresults}(a).

\section {Central extensions}\label{s:covers}
We conclude the paper by considering the case $Z(G)\ne 1$ in Theorem~\ref{thm:mainresults}.

\subsection{Covers of sporadic simple groups}

Fix the notation as in Lemma~\ref{lem:coversingleclass}. In particular, $\widetilde{\mathcal{C}}$ is the image of the $G$-conjugacy class $\mathcal{C}$ in $G/Z(G)$. Note that $|Z(G)|\le 4$.

\begin{lemma}\label{cor:coverconn}
Suppose that $(G,\widetilde{\mathcal{C}})$ is one of $(4\ns\M_{22},\mathrm{2A})$, $(2\ns\Fi_{22},\mathrm{2C})$, $(2\ns\Co_1,\mathrm{2C})$, or $(2\ns\BM,\mathrm{2D})$. Then $\GG(\mathcal{C})$ is connected.
\end{lemma}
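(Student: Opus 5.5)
The plan is to reduce each of the four cases to the connectivity of $\GG(\widetilde{\mathcal C})$ in the simple quotient $\widetilde G=G/Z(G)$, and then lift using Lemma~\ref{lem:coversingleclass} or Lemma~\ref{lem:covermulticlass}.

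\emph{Connectivity downstairs.} First I check that $\GG(\widetilde{\mathcal C})$ is connected in each case, using Section~\ref{s:simple}:
\begin{itemize}
\item For $(\M_{22},\mathrm{2A})$, $\M_{22}$ has a unique involution class and no strongly embedded subgroup, so Corollary~\ref{propconsequences} applies.
\item $\Fi_{22}$ class 2C does not appear in Tables~\ref{tbl:prop3.1}--\ref{tbl:prop3.3}, so the graph is connected by Propositions~\ref{prop:explicit1}--\ref{prop:explicit3}.
\item The same argument applies to $\Co_1$ class 2C.
\item $\BM$ class 2D was shown to be connected directly in the Baby Monster paragraph, via the $\Th$ subgroup and Lemma~\ref{lem:centralizermaximal}.
\end{itemize}

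\emph{Preimages.} Next I determine, from the {\sf GAP} Character Table Library class fusions for $4\ns\M_{22}$, $2\ns\Fi_{22}$, $2\ns\Co_1$ and $2\ns\BM$, how the preimage $\varphi^{-1}(\widetilde{\mathcal C})$ decomposes. For each element of $\widetilde{\mathcal C}$ I need to know whether all its preimages are involutions (rather than elements of order $4$ or $8$), and whether they form a single $G$-class or split into several.

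\emph{Lifting.} If the preimage is a single class of involutions, Lemma~\ref{lem:coversingleclass} gives connectivity immediately. This should be the situation for $4\ns\M_{22}$, where the preimage of 2A has $|Z|=4$ lifts, all involutions. If instead the preimage splits into classes $\mathcal C_1,\dots,\mathcal C_m$ with $\mathcal C=\mathcal C_1$, I compute the class multiplication coefficients $n_G(\mathcal C_1,\mathcal C_1,\mathcal C_j)$ for $j>1$ from the character table via the Frobenius formula. If they all vanish, Lemma~\ref{lem:covermulticlass} gives that $\GG(\mathcal C_1)$ is connected. I expect this to be the situation for $2\ns\Fi_{22}$, $2\ns\Co_1$ and $2\ns\BM$, where the class $\mathcal C$ in the statement is precisely the lift that ends up connected; the other lift should be the edgeless one, e.g.\ $(2\ns\Fi_{22},\mathrm{2c})$ in Theorem~\ref{thm:mainresults}(b).

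\emph{Expected obstacle.} The hard step is the vanishing of $n_G(\mathcal C_1,\mathcal C_1,\mathcal C_j)$ in the split cases. This is a finite character computation, but it must be done in the large covers $2\ns\Co_1$ and $2\ns\BM$. It also requires matching the class names correctly, so that $\mathcal C$ really is $\mathcal C_1$ and not the other lift.

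If some coefficient turned out to be nonzero, I would fall back on the component-stabilizer approach. That means exhibiting an edge in $\GG(\mathcal C)$ and arguing, as in Lemma~\ref{lem1}, that $N_G(\Lambda(t))$ contains $C_G(t)$ together with a further element. Since $C_{\widetilde G}(\tilde t)$ is maximal in these cases, the preimage argument of Lemma~\ref{lem:maxcent} would then force $N_G(\Lambda(t))=G$, and Lemma~\ref{lem:connectedstabilizer} would give connectivity.
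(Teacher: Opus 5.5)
\textbf{Gap: the case $4\ns\M_{22}$.} Your lifting step cannot work here. $Z(G)=\gen{z}$ is cyclic of order $4$. Suppose $g$ is an involution lifting an element of $2\mathrm{A}$. Then $(gz)^2=(gz^3)^2=z^2\ne 1$, so exactly two of the four preimages of each element of $\widetilde{\mathcal C}$ are involutions and the other two have order $4$.

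So your expectation that ``the preimage of 2A has $|Z|=4$ lifts, all involutions'' is impossible. The full preimage $\varphi^{-1}(\widetilde{\mathcal C})$ under $G\to G/Z(G)$ is never a class, or a union of classes, of involutions. Both Lemma~\ref{lem:coversingleclass} and Lemma~\ref{lem:covermulticlass} require exactly that, so neither applies.

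Your fallback does not rescue this case. $C_{\M_{22}}(2\mathrm{A})\cong 2^4\cn\Sym(4)$ has order $384$ and is properly contained in the hexad stabilizer $2^4\cn\Alt(6)$, so it is not maximal. The Lemma~\ref{lem:maxcent}-style argument therefore has nothing to work with.

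In fact the maximality claim fails in all four cases: $C_{\Fi_{22}}(2\mathrm{C})$, $C_{\Co_1}(2\mathrm{C})$ and $C_{\BM}(2\mathrm{D})$ are not maximal either. That is exactly why these four pairs are separated from the theorem that follows.

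The paper settles $4\ns\M_{22}$ by direct machine computation. If you want to use the lifting lemmas, the natural route goes through $2\ns\M_{22}=G/\gen{z^2}$. There, by the order argument above, $2\mathrm{A}$ lifts to two non-conjugate involution classes. You could then apply Lemma~\ref{lem:coversingleclass} with $Z=\gen{z^2}$, provided $g$ and $gz^2$ are $G$-conjugate. However, connectivity of the relevant graph in $2\ns\M_{22}$ does not follow from $\M_{22}$ by the lemmas either; the paper also verifies that by computer. Some genuinely new input is needed for this case.

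\textbf{The three double covers.} Your case split (determine the preimage, then branch) would reach the right conclusion, but your predictions are reversed. In each of $(2\ns\Fi_{22},\mathrm{2C})$, $(2\ns\Co_1,\mathrm{2C})$ and $(2\ns\BM,\mathrm{2D})$, the preimage of $\widetilde{\mathcal C}$ is a single class of involutions. Lemma~\ref{lem:coversingleclass} then applies directly, together with connectivity downstairs, which is exactly the paper's argument. No class multiplication coefficients are needed, so your ``expected obstacle'' does not arise.

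You have also conflated labels. The edgeless classes $(2\ns\Fi_{22},\mathrm{2b})$ and $(2\ns\Fi_{22},\mathrm{2c})$ carry GAP labels and are lifts of the ATLAS class $2\mathrm{A}$, whose graph is already edgeless in $\Fi_{22}$. They are not the other lift of $2\mathrm{C}$.
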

\begin{proof}
The result for $4\ns\M_{22}$ is verified in the supplementary materials. In each remaining case $\mathcal{C}$ is the unique $G$-conjugacy class with image $\widetilde{\mathcal{C}}$, so Lemma~\ref{lem:coversingleclass} applies. The result now follows since $\GG(\widetilde{\mathcal{C}})$ is connected by Theorem~\ref{thm:mainresults}(a) and Propositions~\ref{prop:explicit1},~\ref{prop:explicit2}, and~\ref{prop:explicit3}.
\end{proof}
The rest of the cases where $Z(G)$ is a $2$-group can be handled by examining the class multiplication coefficients. We use the {\sf GAP} class labels for the classes of $G$.
\begin{theorem}
Let $G$ be a 2-cover of a sporadic simple group, and $\mathcal{C}$ be an involution class. Either $\GG(\mathcal{C})$ is connected, or else $\GG(\mathcal{C})$ is edgeless and \[(G,\mathcal{C})\in S:=\{(2\ns\J_2,\mathrm{2c}),(2\ns\Fi_{22},\mathrm{2b}),(2\ns\Fi_{22},\mathrm{2c}),(2\ns\BM,\mathrm{2b})\}.\]
\end{theorem}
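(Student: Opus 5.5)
The plan is a case analysis over the finitely many $2$-covers of sporadic groups with $Z(G)$ of order $2$ (or $4$ for $4\ns\M_{22}$). These are $2\ns\M_{12}$, $2\ns\M_{22}$, $4\ns\M_{22}$, $2\ns\J_2$, $2\ns\HS$, $2\ns\Ru$, $2\ns\Suz$, $2\ns\Co_1$, $2\ns\Fi_{22}$, $2\ns\BM$ and $2\ns\F_4(2)'$. For each $G$ I would list the involution classes $\mathcal C$ of $G$ that are not central, using the {\sf GAP} Character Table Library, and record their images $\widetilde{\mathcal C}$ in $G/Z(G)$. Since $Z(G)$ is a $2$-group, each involution of $G/Z(G)$ lifts either to involutions or to elements of order $4$. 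Only the former matter. For each such $\widetilde{\mathcal C}$, the preimage $\varphi^{-1}(\widetilde{\mathcal C})$ is either a single class or a union of classes, and I split accordingly.

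\textbf{Single-class preimage.} When $\varphi^{-1}(\widetilde{\mathcal C})$ is a single class of involutions, Lemma~\ref{lem:coversingleclass} shows that $\GG(\mathcal C)\cong\GG(\widetilde{\mathcal C})\cdot\overline{K_{|Z|}}$. Hence $\GG(\mathcal C)$ is connected exactly when $\GG(\widetilde{\mathcal C})$ is connected, and edgeless exactly when $\GG(\widetilde{\mathcal C})$ is edgeless. By Propositions~\ref{prop:explicit1}--\ref{prop:explicit3}, every involution class of a sporadic simple group gives a connected graph, except those listed in Table~\ref{tbl:prop3.1}, namely $\Fi_{22}$ 2A, $\Co_2$ 2A, $\Fi_{23}$ 2A and $\BM$ 2A; for these $\Lambda(t)=\{t\}$, so the graph is edgeless. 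Of these, only $\Fi_{22}$ and $\BM$ have a double cover, and there the class 2A lifts to elements of order $4$ or to the central-type classes. I would check this from the tables. The lift of $\BM$ 2A that consists of involutions would then be the class $\mathrm{2b}$ of $2\ns\BM$, which is edgeless, since $\BM$ 2A is edgeless by Table~\ref{tbl:maxcent}. This case already covers Lemma~\ref{cor:coverconn}.

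\textbf{Split preimage.} When $\varphi^{-1}(\widetilde{\mathcal C})=\mathcal C_1\sqcup\dots\sqcup\mathcal C_m$ with $m>1$ (in practice $m=2$), I would compute the class multiplication coefficients $n_G(\mathcal C_i,\mathcal C_i,\mathcal C_j)$ from the character table using the formula quoted before Lemma~\ref{lem:edgeexist}. If, after relabelling, $n_G(\mathcal C_1,\mathcal C_1,\mathcal C_j)=0$ for all $j>1$, then Lemma~\ref{lem:covermulticlass} shows that $\GG(\mathcal C_1)$ is connected and the other graphs are edgeless. This is how I expect $(2\ns\J_2,\mathrm{2c})$, $(2\ns\Fi_{22},\mathrm{2b})$ and $(2\ns\Fi_{22},\mathrm{2c})$ to arise. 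For any remaining edgeless class, Lemma~\ref{lem:edgeexist} gives edgelessness directly from $n_G(\mathcal C)=0$.

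\textbf{Main obstacle.} The hard part is the residual split cases where the hypothesis of Lemma~\ref{lem:covermulticlass} fails, for example when both $n_G(\mathcal C_1)$ and $n_G(\mathcal C_2)$ are nonzero. There I would argue directly. Every edge of $\GG(\mathcal C_i)$ maps to an edge of the connected graph $\GG(\widetilde{\mathcal C})$, so the stabilizer of $\Lambda(t)$ in $G$ maps onto a subgroup of $G/Z(G)$ that contains $C_{\widetilde G}(\tilde t)$ together with the images of suitable neighbours. In the small groups I would simply run \texttt{FindStabilizerOfComponent}. In the large cases ($2\ns\Co_1$, $2\ns\BM$) I would instead use maximality of $C_{\widetilde G}(\tilde t)$, as in Lemma~\ref{lem:maxcent}, together with a nonzero coefficient $n_G(\mathcal C)$, to conclude connectivity.
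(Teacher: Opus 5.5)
Your single-class step (Lemma~\ref{lem:coversingleclass}), your treatment of $(2\ns\BM,\mathrm{2b})$, and your use of $n_G(\mathcal C)=0$ for edgelessness are sound and match the paper. The step that fails is your treatment of $(2\ns\Fi_{22},\mathrm{2b})$ and $(2\ns\Fi_{22},\mathrm{2c})$ through Lemma~\ref{lem:covermulticlass}.

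First, the class 2A of $\Fi_{22}$ does not lift to elements of order $4$. Its preimage is exactly the union of the two involution classes $\mathrm{2b}$ and $\mathrm{2c}$: the 2B-lifts are $\mathrm{2d},\mathrm{2e}$, and 2C lifts to a single class. Second, Lemma~\ref{lem:covermulticlass} requires $\GG(\widetilde{\mathcal C})$ to be connected, a hypothesis you omit, and here $\GG(\mathrm{2A})$ is edgeless. Worse, no product of two distinct commuting 2A-elements of $\Fi_{22}$ lies in 2A, so every coefficient $n_G(\mathcal C_i,\mathcal C_i,\mathcal C_j)$ with $\mathcal C_i,\mathcal C_j\in\{\mathrm{2b},\mathrm{2c}\}$ vanishes. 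Your rule as stated would therefore declare one of $\GG(\mathrm{2b})$, $\GG(\mathrm{2c})$ connected, which is false. In any case that lemma can never output two edgeless lifts, so your expectation is inconsistent. The correct reason is that an edge $\{u,v\}$ of $\GG(\mathcal C_i)$ projects to an edge of $\GG(\widetilde{\mathcal C})$, since $uv\notin Z(G)$ forces $\tilde u\neq\tilde v$. Hence lifts of an edgeless class are edgeless. Alternatively, use $n_G(\mathcal C)=0$ with Lemma~\ref{lem:edgeexist}, which is how the paper handles all of $S$.

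You also miss the observation that dissolves your ``main obstacle'', and it is essentially the paper's whole proof. Suppose $\varphi^{-1}(\widetilde{\mathcal C})$ splits into two classes. Then $tz\notin t^G$ for the central involution $z$, so every $g$ with $\tilde g\in C_{\widetilde G}(\tilde t)$ satisfies $t^g=t$. Thus $C_G(t)$ is the full preimage of $C_{\widetilde G}(\tilde t)$, and it is maximal in $G$ whenever $C_{\widetilde G}(\tilde t)$ is maximal in $\widetilde G$. Your large-case argument silently needs exactly this, both to put $C_{\widetilde G}(\tilde t)$ inside the image of $N_G(\Lambda(t))$ and to get $\tilde u\neq\tilde t$ for a neighbour $u$. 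The observation holds for every split class outside the covers of $\M_{22}$, including those of $2\ns\M_{12}$, $2\ns\J_2$, $2\ns\HS$, $2\ns\Ru$, $2\ns\Suz$ and $2\ns\Fi_{22}$. So Lemma~\ref{lem:edgeexist}, applied in $G$, decides each such class from $n_G(\mathcal C)$ alone: connected if nonzero, edgeless if zero. You need neither Lemma~\ref{lem:covermulticlass} nor runs of \texttt{FindStabilizerOfComponent} on the medium-sized covers.

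Computation is genuinely needed only over $\M_{22}$, where $C_{\M_{22}}(\mathrm{2A})=2^4\cn\Sym(4)$ is not maximal. In $4\ns\M_{22}$, each $Z(G)$-coset over a 2A-element contains two involutions and two elements of order $4$. So your dichotomy ``lifts to involutions or to elements of order $4$'' fails there, and Lemma~\ref{lem:coversingleclass} does not cover that case, contrary to your remark about Lemma~\ref{cor:coverconn}. Finally, $2\ns\twist\F_4(2)'$ does not exist, since the Tits group has trivial Schur multiplier.
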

\begin{proof}
We first compute in {\sf GAP} that $n_G(\mathcal{C})=0$ for all $(G,\mathcal{C})\in S$, and so such graphs are indeed edgeless by Lemma~\ref{lem:edgeexist}. Additionally, that the graphs on 2A-lifts in $2\ns\M_{22}$ are connected is verified in the supplementary materials. Note now that all remaining pairs $(G,\mathcal{C})$ which are neither in $S$ nor covered by Lemma~\ref{cor:coverconn} are such that the centralizer in $G$ of an element of $\mathcal{C}$ is maximal~\cite{atlas}. Moreover, in each such case $n_G(\mathcal{C})\ne0$, as displayed in Table~\ref{tab:doublecovercoefficients}. Hence the result now follows from Lemma~\ref{lem:edgeexist}.
\end{proof}

\begin{table}[H]
   \begin{center}
\begin{tabular}{|llll|llll|}
\hline
$G$           & $\mathcal{C}$ & $\widetilde{\mathcal{C}}$ & $n_G(\mathcal{C})$ & $G$            & $\mathcal{C}$ & $\widetilde{\mathcal{C}}$ & $n_G(\mathcal{C})$ \\ \hline
$2\ns\M_{12}$ & $\mathrm{2b}$ & $\mathrm{2B}$             & $24$               & $2\ns\Suz$     & $\mathrm{2b}$ & $\mathrm{2A}$             & $54$               \\
              & $\mathrm{2c}$ & $\mathrm{2B}$             & $6$                &                & $\mathrm{2c}$ & $\mathrm{2A}$             & $360$              \\\hline
$2\ns\J_2$    & $\mathrm{2b}$ & $\mathrm{2A}$             & $10$               & $2\ns\Fi_{22}$ & $\mathrm{2d}$ & $\mathrm{2B}$             & $1512$             \\
              &               &                           &                    &                & $\mathrm{2e}$ & $\mathrm{2B}$             & $270$              \\\hline
$2\ns\HS$     & $\mathrm{2b}$ & $\mathrm{2A}$             & $80$               & $2\ns\Co_1$    & $\mathrm{2b}$ & $\mathrm{2A}$             & $12600$            \\
              & $\mathrm{2c}$ & $\mathrm{2A}$             & $30$               &                & $\mathrm{2c}$ & $\mathrm{2A}$             & $270$              \\\hline
$2\ns\Ru$     & $\mathrm{2b}$ & $\mathrm{2A}$             & $1120$             & $2\ns\BM$      & $\mathrm{2c}$ & $\mathrm{2B}$             & $7286400$          \\
              & $\mathrm{2c}$ & $\mathrm{2A}$             & $270$              &                & $\mathrm{2d}$ & $\mathrm{2B}$             & $93150$           \\\hline
\end{tabular}
\end{center}
\caption{The double covers of sporadic groups and the involution classes with maximal centralizer and non-zero class multiplication coefficient. We use $\mathcal{C}$ to denote the {\sf GAP} class labelling, and $\widetilde{\mathcal{C}}$ is the image in the simple quotient $G/Z(G)$.}
\label{tab:doublecovercoefficients}
\end{table}
We conclude this subsection by presenting Table~\ref{tbl:3covers}, which summarizes the relevant data for the triple covers of sporadic simple groups---we emphasize that any relevant class of elements of order 3 not appearing in this table has a connected graph. Entries are blank in the final column whenever $\Lambda(t)\subset \langle t\rangle$, as in this case $N_G(\Lambda(t))=N_G(\langle t\rangle)$, and the structure is well-known. The claims are proved computationally in the supplementary materials. Especially for use when examining $3\ns\Fi_{24}'$, we generate $N_G(\Lambda(t))$ using Lemma~\ref{lem1variant}. The code in the supplementary materials uses the fact that elementary abelian subgroups $E$ of order $9$ can be distinguished by the fixed-point multisets $\{\Fix(t): t \in E\}$ and so is not as transportable to other Gill--Guillot graph calculations as \texttt{FindStabilizerOfComponent}. This concludes the proof of Theorem~\ref{thm:mainresults}(b).

\begin{table}[h]
\centering
\begin{tabular}{|lllcllc|}
\hline
$G$             & image         & real? & no. classes & $\Delta(t)$ & $|\Lambda(t)|$ & $N_G(\Lambda(t))$ \\ \hline
$3\ns\M_{22}$   & $3\mathrm{A}$ & yes   & 1           & $3^{2}$     & 6              & $3^2.2^2.3.2$     \\ \hline
$3\ns\J_{3}$    & $3\mathrm{A}$ & yes   & 1           & $3^{1}$     & 2              & --                \\
                & $3\mathrm{B}$ & yes   & 1           & $3^{3}$     & 24             & $[3^6].[2^3]$     \\ \hline
$3\ns\McL$      & $3\mathrm{A}$ & yes   & 1           & $3^{1}$     & 2              & --                \\
                & $3\mathrm{A}$ & no    & 2           & $3^{1}$     & 1              & --                \\
                & $3\mathrm{B}$ & yes   & 1           & $3^{5}$     & 180            & $3^5.\M_{10}$   \\ \hline
$3\ns\Suz$      & $3\mathrm{A}$ & yes   & 1           & $3^{1}$     & $2$            & --                \\
                & $3\mathrm{A}$ & no    & 2           & $3^{1}$     & $1$            & --                \\ \hline
$3\ns\ON$       & --            & --    & --          & --          & --             & --                \\ \hline
$3\ns\Fi_{22}$  & $3\mathrm{A}$ & no    & 2           & $3^{1}$     & $1$            & --                \\ \hline
$3\ns\Fi_{24}'$ & $3\mathrm{B}$ & no    & 2           & $3^1$       & $1$            & --                \\
                & $3\mathrm{C}$ & no    & 2           & $3^1$       & $1$            & --                \\ \hline
\end{tabular}
\caption{Classes for which $\GG(\mathcal{C})$ is not connected. The entry `real?' is used as needed to distinguish between classes with the same image in $G/Z(G)$. When the entry `no. classes' is 2, there are \emph{two} graphs, both with the same component stabilizer.} \label{tbl:3covers}
\end{table}

\subsection{Exceptional covers of alternating groups and groups of Lie type} Finally, we consider the quasisimple groups which are exceptional covers of non-sporadic simple groups.

Table~\ref{tbl:exco} displays the classes which yield disconnected graphs, their images in the corresponding simple quotient, the size of the connected components of their Gill--Guillot graphs, and their component groups; classes in the covers are labelled as in the {\sf GAP} Character Table Library~\cite{CTblLib1.3.11}, and their images are labelled using the $\mathbb{ATLAS}$~\cite{atlas} naming.

We now prove the validity of the information in Table~\ref{tbl:exco}, from which Theorem~\ref{thm:mainresults}(c) follows.

\begin{proof}[Proof of validity of Table~\ref{tbl:exco}]
If $G\notin\{2^2\ns\mathrm{O}_8^+(2),2\ns\F_4(2), 2\ns\twist\E_6(2),2^2\ns\twist\E_6(2)\}$ then the claims are verified in the supplementary materials. We handle the remaining groups separately; throughout, we write $\widetilde{G}:=G/Z(G)$.

Suppose $G=2^2\ns\mathrm{O}_8^+(2)$. Then each involution class of $\widetilde{G}$ has connected Gill--Guillot graph by~\cite[Theorem 2]{GGL25} and Lemma~\ref{lem:connectedstabilizer}. Moreover, $G$ has exactly two classes of non-central involutions, with distinct images in $\widetilde{G}$~\cite{atlas}. Consequently, both classes have connected graphs by Lemma~\ref{lem:coversingleclass}, as claimed.

We now suppose $G=2\ns\F_4(2)$. This group $G$ has eight conjugacy classes of non-central involutions, two corresponding to each such class in the simple quotient $\widetilde{G}$. The classes 2f, 2g, 2h, and 2i (with images 2C and 2D) are handled in the supplementary materials. For the remaining four classes we shall use Lemma~\ref{lem:covermulticlass}. The Gill--Guillot graphs on involution classes in $\widetilde{G}$ are all connected by~\cite[Theorem 2]{GGL25}. We compute using the {\sf GAP} Character Table Library~\cite{CTblLib1.3.11} that \[n_G(\mathrm{2b,2b,2c})=n_G(\mathrm{2d,2d,2e})=0.\]
In particular, the claims for $G$-conjugacy classes 2b, 2c, 2d, and 2e follow from Lemma~\ref{lem:covermulticlass}.

Finally, consider the case $\widetilde{G}=\twist\E_6(2)$. Then $G$ has exactly $|Z(G)|$ involution classes with image $\mathrm{2A}$, $|Z(G)|$ involution classes with image $\mathrm{2B}$, and exactly one involution class with image $\mathrm{2C}$~\cite{atlas}. Each of the involution classes in $\widetilde{G}$ gives a connected graph by~\cite[Theorem 2]{GGL25}, so the $\mathrm{2C}$-lift must also have a connected graph in $G$ by Lemma~\ref{lem:coversingleclass}. Consider now the $\mathrm{2A}$-lifts; for simplicity of notation here we shall assume that $|Z(G)|=4$; the other case is nearly identical. We compute using the {\sf GAP} Character Table Library that $n_G(\mathrm{2d},\mathrm{2d},\mathcal{C})=0$ for all $\mathcal{C}\in\{\mathrm{2e,2f,2g}\}$, hence the claimed result follows from Lemma~\ref{lem:covermulticlass}.

For the $\mathrm{2B}$-lifts we shall consider the two covers separately, starting with the case $|Z(G)|=2$. We consider the maximal subgroup $F:=2\ns\F_4(2)$: we check using the {\sf GAP} Character Table Library that the $F$-conjugacy classes labelled $\mathrm{2d}$ and $\mathrm{2f}$ fuse to the 2B-lifts labelled $\mathrm{2e}$ and $\mathrm{2d}$ in $G$, respectively, so we may assume that $t$ lies in either $F$-conjugacy class 2d or 2f. Now, $|C_G(t)|<|F|$ and $|C_G(t)|$ is divisible by $2^{34}$ whereas $|F|$ is not, so $C_G(t)\setminus F\ne\emptyset$. Thus $\langle C_G(t),F\rangle=G$. Since $\GG(t^F)$ is connected, the result now follows from Lemma~\ref{lem:centralizermaximal}.

Finally, suppose that $|Z(G)|=4$. Let $\sigma$ be an outer automorphism of order $3$. Then $C_G(\sigma)$ has a subgroup $O:=\mathrm{O}_{10}^-(2)$; by checking all possible class fusions in {\sf GAP} we see that $O$ has a class of involutions fusing to the $G$-class 2h of 2B-lifts. Thus, we may find an element $t\in\mathrm{2h}$ which is a $\mathrm{2B}$-lift centralized by $\sigma$. Now, the three central involutions $z_1,z_2,z_3$ are permuted cyclically by $\sigma$, so $tz_1,tz_2,tz_3$ are conjugate via $\sigma$. Therefore, it suffices to show that $\GG(t^G)$ and $\GG((tz_1)^G)$ are connected; without loss of generality $(tz_1)^G=\mathrm{2i}$. Consider the maximal subgroup $M:=2\times2\ns\F_4(2)<G$ --- we check class fusion in {\sf GAP} confirming that the $M$-conjugacy class $\mathrm{2d}$ fuses to the $G$-conjugacy class $\mathrm{2i}$, and $n_M(\mathrm{2d,2d,2d})=270$. Similarly, the $M$-conjugacy classes labelled $\mathrm{2e}$ and $\mathrm{2f}$ fuse to the $G$-conjugacy class labelled $\mathrm{2h}$, and $n_M(\mathrm{2e,2e,2f})=32$. Since the centralizers in $M$ of elements of $M$-classes 2d and 2e are maximal, we deduce that $M\leq N_G(\Lambda(t))$. As above, $\langle C_G(t),M\rangle =G$, so the result follows from Lemma~\ref{lem:connectedstabilizer}.
\end{proof}

We end this paper by describing how to read Table~\ref{tbl:exco}. Each relevant group $G$ occupies a row, with the group name appearing in the leftmost column. The classes $t^G$ of $p$-elements (where $p\mid |Z(G)|$) for which the graph $\GG(t^G)$ is disconnected appear in the second column; should no such class exist, `--' will appear in the second column, and the remaining entries will be left blank. For the remaining columns, information will appear in the order imposed by the list in the second column. Finally, in the case that the component group of $t$ is $\langle t\rangle$, we write `--' in place of the component stabilizer, as this is either the centralizer of $t$ or the normalizer of $\gen{t}$ in $G$.
{\small\section*{Declaration of AI use}
\noindent{A large language model (ChatGPT 5.6 Plus) was used during the revision process to audit our work for any errors. It did not suggest repairs.}}
\begin{sidewaystable}
\centering
\begin{tabular}{|l|c|l|l|l|l|}
\hline
$G$                   & disconnected classes                                   & image in $\widetilde{G}$                                          & $|\Lambda(t)|$                              & $\Delta(t)$                     & $N_G(\Lambda(t))$                    \\\hline
$2\ns\mathrm{L}_3(4)$           & --                                    &                    &                  &            &          \\
$4a\ns\mathrm{L}_3(4)$          & --                                    &                    &                  &            &          \\
$4b\ns\mathrm{L}_3(4)$          & --                                    &                    &                  &            &          \\
$2^2\ns\mathrm{L}_3(4)$         & $\mathrm{2d}$                         & $\mathrm{2A}$      & 3                & $2^2$      & $[2^8].3$          \\
$(2\times 4)\ns\mathrm{L}_3(4)$ & $\mathrm{2d}$                         & $\mathrm{2A}$      & 6                & $2^3$ &$[2^9].3$               \\
$4^2\ns\mathrm{L}_3(4)$         & $\mathrm{2d}$                         & $\mathrm{2A}$      & 12               & $2^4$ &$[2^{10}].3$                \\
$2\ns\mathrm{U}_4(2)$           & $\mathrm{2b}$                         & $\mathrm{2A}$      & 1                & $2^1$ &--                \\
$2\ns\mathrm{U}_6(2)$           & $\mathrm{2b},\mathrm{2c},\mathrm{2f}$ & 2A, 2A, 2C         & 1, 1, 560        & $2^1,2^1,2^{10}$     &--, --, $2^{10}.\mathrm{L}_3(4)$\\
$2^2\ns\mathrm{U}_6(2)$         & 2d, 2e, 2f, 2g, 2l                     & 2A, 2A, 2A, 2A, 2C & 1, 1, 1, 1, 1120 & $2^1,2^1,2^1,2^1,2^{11}$      &--, --, --, --, $2^{11}.\mathrm{L}_3(4)$         \\
$2\ns\mathrm{S}_6(2)$           & 2c                                    & 2C                 & 56               & $2_+^{1+6}$ &$2_+^{1+6}.\mathrm{L}_3(2)$\\
$2\ns\mathrm{O}_8^+(2)$                  & 2d                                  & 2B                 & 1                & $2^1$                &--\\
$2^2\ns\mathrm{O}_8^+(2)$                & --                                    &                    &                  &     &                 \\
$2\ns\G_2(4)$                    & --                                    &                    &                  &             &        \\
$2\ns\F_4(2)$            & 2c, 2e            & 2A, 2B                                  & 1, 1                               & $2^1,2^1$              &--,--                  \\
$2\ns\twist\B_2(8)$        & 2b                                              & 2A                                              & 14                                       & $2^4$                     &$[2^7].7$                       \\
$2^2\ns\twist\B_2(8)$      & 2d                                              & 2A                                              & 28                                       & $2^5$                      &$[2^8].7$                      \\
$2\ns\twist\E_6(2)$        & 2c                                              & 2A                                              & 1                                        & $2^1$                &--                            \\
$2^2\ns\twist\E_6(2)$      & 2e, 2f, 2g                                      & 2A, 2A, 2A                                      & 1, 1, 1                                  & $2^1, 2^1, 2^1$        &--,--,--                          \\
$3\ns\mathrm{Alt}(6)$               & $\mathrm{3c,3d}$                                & $\mathrm{3A},\mathrm{3B}$                       & 6                                        & $3^2$             &$3^2.\Sym(3)$                               \\
$3\ns\mathrm{Alt}(7)$               & $\mathrm{3c,3d}$                                & $\mathrm{3A},\mathrm{3B}$                       & 6, 6                                        & $3^2$, $3^2$ &$3^2.\Sym(3)$, $3^2.\Sym(4)$                                            \\\hline
$3a\ns\mathrm{U}_4(3)$  & 3c, 3d, 3e, 3f,
                & 3A, 3A, 3A, 3B,
                  & 2, 1, 1, 30,
               & $3^1, 3^1, 3^1, 3^5$, &--, --, --, $3^5.\Alt(6)$
        \\&3g, 3h, 3i, 3j &3B, 3B, 3C, 3D &1, 1, 90, 18 &$3^1, 3^1, 3^5, 3^3$ &--, --,$3^5.\Alt(6)$, $[3^6].2$ \\\hline
$3b\ns\mathrm{U}_4(3)$  & 3c, 3d, 3e, 3f, 3g, 3h                          & 3A, 3A, 3A, 3B, 3C, 3D                          & 2, 1, 1, 90, 90, 72                      & $3^1, 3^1, 3^1, 3^5, 3^5, 3^4$     &--, --, --, $3^5.\Alt(6)$,\\&&&&&$3^5.\Alt(6)$, $[3^6].[2^3]$            \\\hline
$3^2\ns\mathrm{U}_4(3)$ & 3i, 3j, $\dots$, 3q, 3r, &

3A, 3A, $\dots$, 3A, 3B,
& 2, 1, $\dots$, 1, 90,
& $3^1,3^1,\dots,3^1,3^6$,&--, --,$\dots$, --, $3^6.\Alt(6)$\\
&3s, 3t, 3u, 3v, 3w, 3x &
3B, 3B, 3C, 3C, 3C, 3D &
1, 1, 1, 1, 90, 54 &
$3^1,3^1,3^1,3^1,3^6,3^4$ &--, --, --, --, $3^6.\Alt(6)$, $[3^7].2$\\\hline
$3\ns\mathrm{O}_7(3)$            & 3d, 3e, 3f, 3g, 3j, 3k     & 3B, 3B, 3B, 3C, 3F, 3G                          & 72, 1, 1, 270, 1296,                & $3^6, 3^1, 3^1, 3^6, 3^{2+6}$,       & $3^6.\mathrm{S}_4(3).2$, --, --,\\&& &2592&$ 3^{2+6}$& $3^6.\mathrm{S}_4(3).2$, $3^{2+6}.[2^5].[3^2].2$,\\&&&&& $3^{2+6}.[2^5].[3^2].2$   \\\hline
$3\ns\G_2(3)$            & 3f, 3g                                          & 3D, 3E                                          & 54, 54                                       & $3^4$, $3^4$& $[3^7].[2^2]$, $[3^7].[2^2]$\\\hline
\end{tabular}
\caption{Classes of exceptional covers of simple groups for which $\GG(\mathcal{C})$ is disconnected.}
\label{tbl:exco}

\end{sidewaystable}

\bibliographystyle{amsplain}
\bibliography{references}

@article {AschSeg92,
    AUTHOR = {Aschbacher, M.\ and Segev, Y.},
     TITLE = {The uniqueness of groups of {L}yons type},
   JOURNAL = {J.\ Amer.\ Math.\ Soc.},
  FJOURNAL = {Journal of the American Mathematical Society},
    VOLUME = {5},
      YEAR = {1992},
    NUMBER = {1},
     PAGES = {75--98},
      ISSN = {0894-0347,1088-6834},
   MRCLASS = {20D08},
MRREVIEWER = {Gernot\ Stroth},
       DOI = {10.2307/2152751},
       URL = {https://doi-org.ezproxy.st-andrews.ac.uk/10.2307/2152751},
}

@article {IranJafar08,
    AUTHOR = {Iranmanesh, A.\ and Jafarzadeh, A.},
     TITLE = {On the commuting graph associated with the symmetric and
              alternating groups},
   JOURNAL = {J.\ Algebra Appl.},
  FJOURNAL = {Journal of Algebra and its Applications},
    VOLUME = {7},
      YEAR = {2008},
    NUMBER = {1},
     PAGES = {129--146},
      ISSN = {0219-4988,1793-6829},
   MRCLASS = {20B30 (05C25)},
MRREVIEWER = {Michael\ Giudici},
       DOI = {10.1142/S0219498808002710},
       URL = {https://doi-org.ezproxy.st-andrews.ac.uk/10.1142/S0219498808002710},
}

@article {BBHR07,
    AUTHOR = {Bates, C.\ and Bundy, D.\ and Hart, S.\ and Rowley, P.},
     TITLE = {Commuting involution graphs for sporadic simple groups},
   JOURNAL = {J.\ Algebra},
  FJOURNAL = {Journal of Algebra},
    VOLUME = {316},
      YEAR = {2007},
    NUMBER = {2},
     PAGES = {849--868},
      ISSN = {0021-8693,1090-266X},
   MRCLASS = {20D08 (05C25)},
MRREVIEWER = {Andrew\ Woldar},
       DOI = {10.1016/j.jalgebra.2007.04.019},
       URL = {https://doi-org.ezproxy.st-andrews.ac.uk/10.1016/j.jalgebra.2007.04.019},
}

@ARTICLE{brauerfowler1955,
  AUTHOR =       "Brauer, R.\ and Fowler, K.",
  TITLE =        "On Groups of Even Order",
  JOURNAL =      "Ann.\ Math.",
  YEAR =         "1955",
  volume =       "62",
  pages =        "565--583",
}

@article{GG25,
 author = {Gill, N.\ and Guillot, P.},
 title = {The binary actions of alternating groups},
 journal = {Confluentes Math.},
 volume = {17},
 pages = {73--89},
 year = {2025},
}

@article {GG252,
    AUTHOR = {Gill, N.\ and Guillot, P.},
     TITLE = {The binary actions of simple groups with a single conjugacy
              class of involutions},
   JOURNAL = {J.\ Group Theory},
  FJOURNAL = {Journal of Group Theory},
    VOLUME = {28},
      YEAR = {2025},
    NUMBER = {1},
     PAGES = {215--240},
      ISSN = {1433-5883,1435-4446},
   MRCLASS = {20D05 (05C25 20E45)},
MRREVIEWER = {Qinhui\ Jiang},
       DOI = {10.1515/jgth-2024-0066},
       URL = {https://doi-org.ezproxy.st-andrews.ac.uk/10.1515/jgth-2024-0066},
}

@article {GGL25,
    AUTHOR = {Gill, N.\ and Guillot, P.\ and Liebeck, M.W.},
     TITLE = {The binary actions of simple groups of {L}ie type of
              characteristic 2},
   JOURNAL = {Pacific J.\ Math.},
  FJOURNAL = {Pacific Journal of Mathematics},
    VOLUME = {336},
      YEAR = {2025},
    NUMBER = {1--2},
     PAGES = {113--135},
      ISSN = {0030-8730,1945-5844},
   MRCLASS = {20D06 (20E45)},
       DOI = {10.2140/pjm.2025.336.113},
       URL = {https://doi-org.ezproxy.st-andrews.ac.uk/10.2140/pjm.2025.336.113},
}

@article{delvallegill2025+,
    author = {del Valle, C.\ and Gill, N.},
    title = {The binary actions of sporadic groups},
    journal = {arXiv preprint.}

}

@article{dVGL26,
    author = {del Valle, C.\ and Gill, N.\ and Liebeck, M.W.},
    title = {The binary actions of low rank groups of {L}ie type},
    journal = {in preparation.}

}

@incollection {cherlin2000,
    AUTHOR = {Cherlin, G.},
     TITLE = {Sporadic homogeneous structures},
 BOOKTITLE = {The {G}elfand {M}athematical {S}eminars, 1996--1999},
    SERIES = {Gelfand Math.\ Sem.},
     PAGES = {15--48},
 PUBLISHER = {Birkh\"auser Boston, Boston, MA},
      YEAR = {2000},
      ISBN = {0-8176-4013-4},
   MRCLASS = {03C13 (03C60 20B05)},
MRREVIEWER = {H.\ Dugald\ Macpherson},
}

@article {Lach84,
    AUTHOR = {Lachlan, A.H.},
     TITLE = {On countable stable structures which are homogeneous for a
              finite relational language},
   JOURNAL = {Israel J.\ Math.},
  FJOURNAL = {Israel Journal of Mathematics},
    VOLUME = {49},
      YEAR = {1984},
    NUMBER = {1--3},
     PAGES = {69--153},
      ISSN = {0021-2172},
   MRCLASS = {03C45 (03C15)},
MRREVIEWER = {B.\ I.\ Zil\cprime ber},
       DOI = {10.1007/BF02760647},
       URL = {https://doi-org.ezproxy.st-andrews.ac.uk/10.1007/BF02760647},
}

@misc{codesup,
    Author = {Craven, D.A.\ and del Valle, C.\ and Parker, C.W.},
    title = {Supplementary materials for `{T}he {G}ill--{G}uillot graph for sporadic and related groups'},
    note = {{\sc {Magma}} code, 2026, hosted with the paper on the arXiv.}
}

@book {gorenstein,
    AUTHOR = {Gorenstein, D.},
     TITLE = {Finite groups},
   EDITION = {Second},
 PUBLISHER = {Chelsea Publishing Co., New York},
      YEAR = {1980},
     PAGES = {xvii+519},
      ISBN = {0-8284-0301-5},
   MRCLASS = {20-02 (20Dxx)},
}

@misc{ CTblLib1.3.11, 
author = {Breuer, T.}, 
title = {The \textsf{GAP} {C}haracter {T}able {L}ibrary, {V}ersion 1.3.11}, 
month ={May}, 
year = {2025},
note = {\url{http://www.math.rwth-aachen.de/~Thomas.Breuer/ctbllib}, \textsf{GAP} package} }

@article {magma,
    AUTHOR = {Bosma, W.\ and Cannon, J.\ and Playoust, C.},
     TITLE = {The {M}agma algebra system. {I}. {T}he user language},
      NOTE = {Computational algebra and number theory (London, 1993)},
   JOURNAL = {J.\ Symbolic Comput.},
  FJOURNAL = {Journal of Symbolic Computation},
    VOLUME = {24},
      YEAR = {1997},
    NUMBER = {3-4},
     PAGES = {235--265},
      ISSN = {0747-7171},
   MRCLASS = {68Q40},
       DOI = {10.1006/jsco.1996.0125},
       URL = {http://dx.doi.org/10.1006/jsco.1996.0125},
}

@book {atlas,
    AUTHOR = {Conway, J.H.\ and Curtis, R.T.\ and Norton, S.P.\ and Parker, R.A.\ and Wilson, R.A.},
     TITLE = {{$\mathbb{ATLAS}$} of finite groups},
      NOTE = {Maximal subgroups and ordinary characters for simple groups,
              With computational assistance from J.G.\ Thackray},
 PUBLISHER = {Oxford University Press, Eynsham},
      YEAR = {1985},
     PAGES = {xxxiv+252},
      ISBN = {0-19-853199-0},
   MRCLASS = {20D05 (20-02)},
MRREVIEWER = {R.\ L.\ Griess},
}

@article {Bender,
    AUTHOR = {Bender, H.},
     TITLE = {Transitive {G}ruppen gerader {O}rdnung, in denen jede
              {I}nvolution genau einen {P}unkt festl\"a\ss t},
   JOURNAL = {J.\ Algebra},
  FJOURNAL = {Journal of Algebra},
    VOLUME = {17},
      YEAR = {1971},
     PAGES = {527--554},
      ISSN = {0021-8693},
   MRCLASS = {20.25},
MRREVIEWER = {M.\ E.\ Harris},
       DOI = {10.1016/0021-8693(71)90008-1},
       URL = {https://doi-org.ezproxy.st-andrews.ac.uk/10.1016/0021-8693(71)90008-1},
}

@article {AschbacherExistence,
    AUTHOR = {Aschbacher, M.},
     TITLE = {A condition for the existence of a strongly embedded subgroup},
   JOURNAL = {Proc.\ Amer.\ Math.\ Soc.},
  FJOURNAL = {Proceedings of the American Mathematical Society},
    VOLUME = {38},
      YEAR = {1973},
     PAGES = {509--511},
      ISSN = {0002-9939,1088-6826},
   MRCLASS = {20D25 (05C25)},
MRREVIEWER = {Henry\ S.\ Leonard, Jr.},
       DOI = {10.2307/2038941},
       
}

@article {MorPar13,
    AUTHOR = {Morgan, G.L.\ and Parker, C.W.},
     TITLE = {The diameter of the commuting graph of a finite group with
              trivial centre},
   JOURNAL = {J.\ Algebra},
  FJOURNAL = {Journal of Algebra},
    VOLUME = {393},
      YEAR = {2013},
     PAGES = {41--59},
      ISSN = {0021-8693,1090-266X},

}

@book {GLS2,
    AUTHOR = {Gorenstein, D.\ and Lyons, R.\ and Solomon, R.},
     TITLE = {The classification of the finite simple groups. {N}umber 2.
              {P}art {I}. {C}hapter {G}},
    SERIES = {Mathematical Surveys and Monographs},
    VOLUME = {40.2},
 PUBLISHER = {American Mathematical Society, Providence, RI},
      YEAR = {1996},
     PAGES = {xii+218},

}

@book {GLS3,
    AUTHOR = {Gorenstein, D.\ and Lyons, R.\ and Solomon, R.},
     TITLE = {The classification of the finite simple groups. {N}umber 3.
              {P}art {I}. {C}hapter {A}},
    SERIES = {Mathematical Surveys and Monographs},
    VOLUME = {40.3},
 PUBLISHER = {American Mathematical Society, Providence, RI},
      YEAR = {1998},
     PAGES = {xvi+419},
}

@article {dlpp2024un,
    AUTHOR = {Dietrich, H. and Lee, M. and Pisani, A. and
              Popiel, T.},
     TITLE = {Explicit construction of the maximal subgroups of the
              {M}onster},
   JOURNAL = {J. Algebra},
    VOLUME = {689},
      YEAR = {2026},
     PAGES = {862--895},

}

\end{document}